\documentclass[a4paper,fleqn]{cas-sc}

\usepackage[numbers]{natbib}
\usepackage{graphicx}%
\usepackage[export]{adjustbox}
\usepackage{multirow}%
\usepackage{amsmath,amssymb,amsfonts}%
\usepackage{amsthm}%
\usepackage{mathtools}
\usepackage{mathrsfs}%
\usepackage[title]{appendix}%
\usepackage{xcolor}%
\usepackage{textcomp}%
\usepackage{manyfoot}%
\usepackage{booktabs}%
\usepackage{algorithm}%
\usepackage{algorithmicx}%
\usepackage{algpseudocode}%
\usepackage{listings}%
\usepackage{soul}%

\usepackage{bm}
\usepackage{enumerate}
\usepackage[utf8]{inputenc}
\usepackage{mparhack}
\usepackage{marginnote}
\usepackage{anyfontsize}

\newcommand{\setdef}[2]{\left\{\ #1\ \left|\ \vphantom{#1} #2\ \right.\right\}}

\DeclareMathOperator{\curl}{curl}
\DeclareMathOperator{\dom}{dom}
\DeclareMathOperator{\divg}{div}
\DeclareMathOperator{\im}{im}
\DeclareMathOperator{\id}{id}

\DeclareMathOperator{\supp}{supp}

\newcommand{\R}{\mathbb{R}}

\newcommand{\ext}{\mathrm{ext}}
\newcommand{\natNum}{\mathbb{N}}

\newcommand{\charFunction}[1]{\mathbb{1}_{#1}}

\theoremstyle{plain}
\newtheorem{theorem}{Theorem}
\newtheorem{proposition}[theorem]{Proposition}
\newtheorem{lemma}[theorem]{Lemma}

\theoremstyle{definition}
\newtheorem{definition}[theorem]{Definition}

\theoremstyle{remark}
\newtheorem{remark}[theorem]{Remark}%%%

\begin{document}
\let\WriteBookmarks\relax
\def\floatpagepagefraction{1}
\def\textpagefraction{.001}

% Short title
\shorttitle{Analysis of an eddy current brake model}    

% Short author
\shortauthors{A.~Fischer, K.~Krha\v{c}, T.~Reis, A.A.~Wierzba}  

% Main title of the paper
\title[mode=title]{Analysis of a coupled magneto-quasistatic--mechanical model of an eddy current brake}  

% Title footnote mark
\tnotemark[1]

% Title footnote 1.
\tnotetext[1]{This work was funded by the Deutsche Forschungsgemeinschaft (DFG, German
Research Foundation) through the Collaborative Research Centre CRC~1701
``Port-Hamiltonian Systems'', project number 531152215.}

% First author
%
% Options: Use if required
% eg: \author[1,3]{Author Name}[type=editor,
%       style=chinese,
%       auid=000,
%       bioid=1,
%       prefix=Sir,
%       orcid=0000-0000-0000-0000,
%       facebook=<facebook id>,
%       twitter=<twitter id>,
%       linkedin=<linkedin id>,
%       gplus=<gplus id>]

\author[1]{Anna Fischer}[orcid=0009-0002-4855-438X]
\credit{Conceptualization, Methodology, Formal analysis,
Writing -- original draft, Writing -- review \& editing}
\ead{anna.fischer@tu-ilmenau.de}

\author[2]{Kaja Krha\v{c}}
\credit{Conceptualization, Methodology, Formal analysis,
Writing -- original draft, Writing -- review \& editing}
\ead{krhac@uni-wuppertal.de}

\author[1]{Timo Reis}[orcid=0000-0003-0721-8494]
\cormark[1]
\credit{Conceptualization, Methodology, Formal analysis,
Writing -- original draft, Writing -- review \& editing}
\ead{timo.reis@tu-ilmenau.de}

\author[2]{Alexander A. Wierzba}[orcid=0000-0002-2013-0246]
\credit{Conceptualization, Methodology, Formal analysis,
Writing -- original draft, Writing -- review \& editing}
\ead{alexander.wierzba@uni-wuppertal.de}

% Address/affiliation
\affiliation[1]{organization={Institut f\"ur Mathematik, Technische Universit\"at Ilmenau}, 
	addressline={Weimarer Stra{\ss}e~25}, 
	          citysep={},
		postcode={98693}, 
		city={Ilmenau}, 
		country={Germany}}

\affiliation[2]{organization={Fakult\"at f\"ur Mathematik und Naturwissenschaften, Bergische Universit\"at Wuppertal}, 
  addressline={Gau{\ss}stra{\ss}e 20}, 
    postcode={42119},
    city={Wuppertal}, 
    citysep={},
    country={Germany}}

% Corresponding author text
\cortext[1]{Corresponding author}

% For a title note without a number/mark
%\nonumnote{}

% Here goes the abstract
\begin{abstract}
We study a coupled magneto-quasistatic model for an eddy current
brake. The model leads to a nonlinear infinite-dimensional
differential-algebraic system in which the nonlinearity is structural rather
than caused by nonlinear material laws. It is generated by the closed
electromechanical feedback loop: the angular momentum determines the velocity
of the conducting disk, the velocity and the magnetic field generate motional
eddy currents, and these currents induce a Lorentz torque acting back on the
rotor. We introduce a finite-dissipation solution concept and prove global
existence and uniqueness of weak solutions for initial data given by the
magnetic field and the angular momentum, and for inputs given by the supplied
coil current and the externally applied torque.
\end{abstract}

% Keywords
% Each keyword is seperated by \sep
\begin{keywords}
magneto-quasistatic system \sep
Maxwell's equations \sep
nonlinear evolution equation \sep
differential-algebraic equation \sep
electromechanical coupling \sep
eddy current brake
\MSC[2020]{34A09, 37L05, 78A30, 35K90, 47J35}
\end{keywords}

\maketitle

% Main text

\section{Introduction}    
Eddy current brakes use magnetic forces to decelerate a moving conductor. Since their operation is contactless, and thus without friction-induced wear, they are particularly attractive for applications such as high-speed trains, amusement rides, industrial machinery, and fitness equipment \cite{Ramharack2023,Sivakumar2014,US5656001A}. Mathematical models and numerical simulations of such devices have been considered, for instance, in \cite{Karakoc2016,Fan2024}. From a modeling perspective, an eddy current brake can be described by a coupled PDE--ODE system: the electromagnetic field is governed by Maxwell's equations, while the rotational motion of the disk is represented by an ordinary differential equation. The coupling between both parts is induced by the Lorentz force.

In the configuration considered here, displacement currents are negligible
compared with conduction currents. It is therefore natural to replace the full
Maxwell system by the magneto-quasistatic approximation, obtained by
neglecting the displacement current term in Ampere's law. This approximation
leads to algebraic constraints in the non-conducting subdomains, such as the
air gap between the electromagnet and the rotating disk. As a consequence, the
overall model naturally takes the form of a partial differential-algebraic
equation (PDAE).

The resulting system is nonlinear even for linear conductivity and reluctivity
laws. This nonlinearity is not due to nonlinear material parameters, but to the
structure of the electromechanical coupling. The angular momentum determines
the disk velocity, the disk velocity produces a motional electric field
$\bm v\times\bm B$, and the induced current generates a Lorentz torque acting
back on the angular momentum. Thus the electromagnetic and mechanical
subsystems form a nonlinear feedback loop.
%For such systems, the literature currently lacks a comprehensive, general solution theory.

In this paper, we analyse this nonlinear coupled system. The key point is to
separate the closed linear magneto-quasistatic realization from the nonlinear
electromechanical feedback. The magnetic subsystem is realized by a closed
nonnegative form on a finite-dissipation state space, while the motional
current and the Lorentz torque are treated as nonlinear coupling terms. This
leads to an integral formulation in which the nonlinear feedback is handled by
a Picard fixed-point argument. The continuation to arbitrary finite time
intervals is obtained from the energy balance, where the Lorentz coupling
terms cancel. We also record the formal port-Hamiltonian energy structure of
the model.

Most mathematical results for eddy current models focus on the
electromagnetic subsystem without coupling to the mechanical dynamics
\cite{PaulyPicard2017,PaulyPicardTrostorffWaurick2021,ChillReisStykel2023}.
Such models typically lead to mixed elliptic--parabolic systems, and
nonlinearities usually enter through material laws or constitutive relations.
The eddy current brake considered here is different: even with linear
materials, the coupling to the rotating conductor produces a nonlinear
feedback between the magnetic field and the mechanical angular momentum. To
the best of our knowledge, an existence and uniqueness analysis for this
fully coupled magneto-quasistatic--mechanical eddy current brake model has not
yet been available in the literature.

This paper is organized as follows. Section~\ref{sec:modeling} derives the eddy current brake model from the magneto-quasistatic Maxwell equations and the mechanical torque balance. Section~\ref{sec:solution_concept} collects the standing hypotheses, introduces the weak solution concept, and records the formal port-Hamiltonian energy structure of the model. In Section~\ref{sec:mqs}, we construct the linear magneto-quasistatic realization, including its inhomogeneous form formulation in the extrapolation space. Section~\ref{sec:solvability_analysis} proves existence and uniqueness for the coupled electromagnetic--mechanical system by a Picard fixed-point argument. Section~\ref{sec:model_extensions} discusses possible model extensions, including the full Maxwell system and non-axisymmetric disks.

\section{Model derivation}\label{sec:modeling}

This section derives the formal model used in the subsequent analysis. We first recall the magneto-quasistatic Maxwell equations and the constitutive relations. We then describe the rotating disk, the Lorentz-force coupling, and the resulting partial differential-algebraic system. The precise assumptions and the weak interpretation of the equations are postponed to Section~\ref{sec:solution_concept}.

\subsection{Eddy current brake model}\label{sec:model}

%The model that forms the basis for the detailed analysis carried out in the subsequent sections. 
The electromagnetic field variables are the $\R^3$-valued functions
\[
\begin{aligned}
\bm{B}&:\ \text{magnetic flux density}, & \bm{H}&:\ \text{magnetic field strength},\\
\bm{E}&:\ \text{electric field strength}, & \bm{J}&:\ \text{electric current density},
\end{aligned}
\]
which are defined on $\Omega \times [0,T)$, where $\Omega \subseteq \R^3$ denotes a bounded computational domain and $T \in (0,\infty]$ denotes the time horizon.
Since displacement currents are negligible in the eddy current regime considered here, it is reasonable to employ the \emph{magneto-quasistatic approximation}, obtained by setting ${\textstyle\frac{\partial}{\partial t}}\bm{D}\equiv 0$ in Amp\`ere's law. 
The magneto-quasistatic Maxwell equations read
\begin{align*}
  \tfrac{\mathrm{d}}{\mathrm{d}t} \bm{B}(\xi,t) &= -\,\curl \bm{E}(\xi,t), \\
  0 &= \phantom{-}\curl \bm{H}(\xi,t) - \bm{J}(\xi,t),
\end{align*}
and are supplemented with a static Silver--Müller-type absorbing boundary condition on the artificial outer boundary
\cite[Sec.~13.5.1]{Monk2003},
\begin{equation}
\label{eq:silver_mueller_bc}
\bm{E}(\xi,t)\times \bm{n}_o(\xi)
=
-\zeta(\xi)\,
\bm{n}_o(\xi)\times
\bigl(\nu(\xi)\bm{B}(\xi,t)\times \bm{n}_o(\xi)\bigr),
  \qquad \xi\in\partial\Omega,\ t\in[0,T),
\end{equation}
where $\bm{n}_o$ denotes the outward unit normal vector on $\partial\Omega$ and
$\zeta$ is a nonnegative scalar boundary absorption coefficient.
For $\zeta=0$, \eqref{eq:silver_mueller_bc} reduces to the homogeneous tangential boundary condition. The precise trace interpretation of \eqref{eq:silver_mueller_bc} will be specified in the solution concept below.

The domain $\Omega$ is understood as a bounded computational domain obtained by truncating the exterior region, and
\eqref{eq:silver_mueller_bc} is imposed on the artificial outer boundary to reduce reflections caused by the truncation of the exterior region.

The model is further equipped with
\emph{constitutive relations} that link $\bm{H}$ to $\bm{B}$ and $\bm{J}$ to $\bm{E}$.
For linear materials, these take the form
\begin{align*}
  \bm{H}(\xi,t) &= \nu(\xi)\,\bm{B}(\xi,t),\\
  \bm{J}(\xi,t) &= \sigma(\xi)\,\bm{E}(\xi,t)
                   + \bm{J}_{\ext}(\xi,t),
\end{align*}
where $\nu,\sigma\colon \Omega \to \R^{3\times 3}$ denote, respectively,
the (tensorial) magnetic reluctivity and electrical conductivity of the material,
and $\bm{J}_{\ext}$ represents externally imposed currents.
As in \cite{ChillReisStykel2023}, we consider a stranded electromagnet as a source of the magnetic field acting on the disk.
In such a device, the injected current is generated by $m$ windings according to
\[
\bm{J}_{\mathrm{em}}(\xi, t) = \chi(\xi) i(t),
\]
where $i$ is an $\mathbb{R}^m$-valued input current and $\chi$ is the $\mathbb{R}^{3 \times m}$-valued winding density describing the geometry of the windings.

\subsection{Rotating disk and Lorentz-force coupling}
\label{subsec:disk_model}

The eddy current brake considered in this paper consists of an electromagnet
and a moving conductor, modeled as a rotating disk of uniform thickness. The disk occupies the domain $\Omega_{\mathrm d}\subset\Omega$, as illustrated in Figure~\ref{fig:disk}.
% \begin{equation}  \label{fig:disk}
% \includegraphics[width=0.5\linewidth, valign=c]{RotatingDisk.png} \quad \,.
% \end{equation}
\begin{figure}
    \centering
    \includegraphics[width=0.75\linewidth]{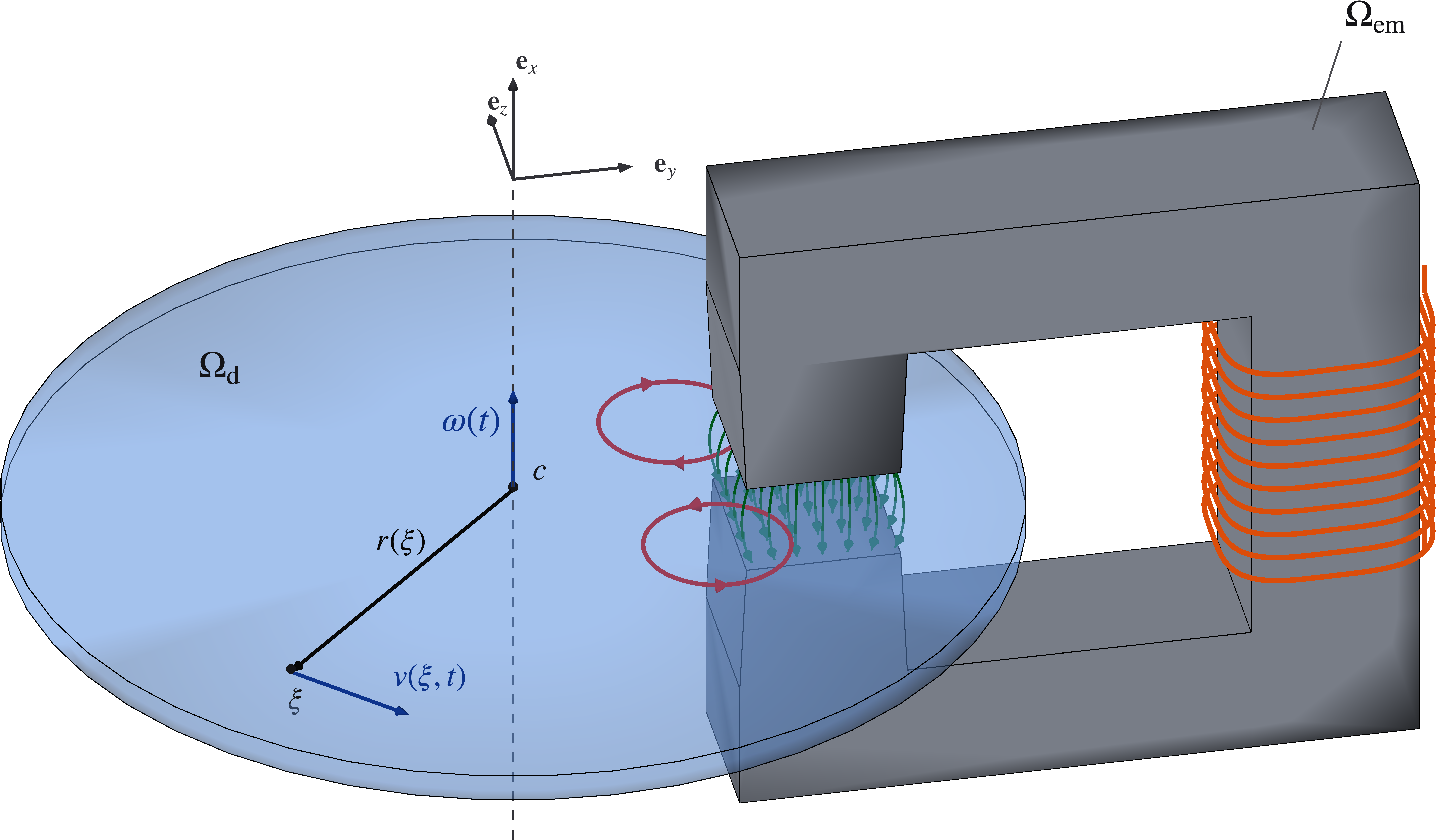}
    \caption{Three-dimensional representation of a rotary eddy current brake and its kinematic and electromagnetic quantities. It consists of a conductive disk with domain $\Omega_{\mathrm{d}}$, which rotates with angular velocity $\omega$ about the $\bm{e}_x$-axis. The electromagnet with domain $\Omega_{\mathrm{em}}$, which includes the excitation windings to the right, produces a magnetic flux density, illustrated by its magnetostatic field lines across the pole-shoe region of the electromagnet. The red closed loops schematically indicate the direction of the eddy currents induced in the disk. }
    \label{fig:disk}
\end{figure}
We assume
that the disk undergoes rigid-body rotation about the fixed axis passing through the center of the disk,
$\{c+s\bm e_x:s\in\R\}$,
where $c\in\Omega_{\mathrm d}$ is a fixed point on the axis of rotation (for instance the
center point of the disk) and
$\bm e_x\in\R^3$ denotes the unit vector in the $x$-direction. For any point $\xi \in \Omega_{\mathrm d}$, we define the position vector relative to the reference point $c$ by
$\bm r(\xi):=\xi-c$.  If
$\omega(t)\in\R$ denotes the angular velocity, then the corresponding velocity field is given by
\[
\bm v(\xi,t)
=
\omega(t)\bigl(\bm e_x\times\bm r(\xi)\bigr),
\qquad \xi\in\Omega_{\mathrm d}.
\]

The eddy currents are induced by the Lorentz force acting on charges moving with velocity $\bm{v}$ in the magnetic field $\bm{B}$. The Lorentz force density per unit charge,  $\bm{v} \times \bm{B}$, has the physical dimension of an electric field. Denoting the conductivity of the disk by $\sigma_{\mathrm d}$, the current density induced by the motion of the disk is
$\bm{J}_{\mathrm{eddy}}=\sigma_{\mathrm{d}}(\bm{v}\times\bm{B})$.
The resulting total current density in the disk is therefore given by
\[
\bm J_{\mathrm d}(\xi,t)
=
\sigma_{\mathrm d}(\xi)
\bigl(
\bm E(\xi,t)+\bm v(\xi,t)\times\bm B(\xi,t)
\bigr),
\qquad \xi\in\Omega_{\mathrm d}.
\]
Here $\bm J_{\mathrm d}$ denotes the total current density in the disk,
including both the Ohmic contribution induced by $\bm E$ and the motional
contribution induced by $\bm v\times\bm B$.

By Lenz's law, the eddy currents $\bm{J}_{\mathrm{eddy}}$ are directed so as to oppose the motion of the conductor. The Lorentz force density acting on the disk is given by $\bm{J}_{\mathrm{d}} \times \bm{B}$, and hence the torque about the axis of rotation is obtained by integrating the corresponding moment density over $\Omega_\mathrm{d}$. If $L(t)$ and $M_{\ext}(t)$ denote, respectively, the angular momentum of the disk and the externally applied torque at time $t$, then the torque balance is given by
\begin{equation*}
    \dot L(t) = \bm{e}_x^\top \int_{\Omega_\mathrm{d}} \bm{r}(\xi) \times \bigl(\bm{J}_{\mathrm{d}}(\xi,t) \times \bm{B}(\xi,t)\bigr)\,\mathrm{d}\xi+M_{\ext}(t).
\end{equation*}
Furthermore, we use the relation $\omega(t)=I_{\mathrm{d}}^{-1}L(t)$, where $I_{\mathrm{d}}$ denotes the moment of inertia of the disk.

\subsection{Coupled partial differential-algebraic model}
\label{subsec:coupled_model}

In addition to the disk domain $\Omega_{\mathrm d}$, we denote by
$\Omega_{\mathrm{em}}$ the conducting part of the electromagnet. The conducting
region is written as
$\Omega_{\mathrm c}:=\Omega_{\mathrm d}\cup\Omega_{\mathrm{em}}$,
and the remaining non-conducting part of the computational domain is denoted
by $\Omega_0$. The precise geometric assumptions on these subdomains are
collected in Subsection~\ref{subsec:standing_assumptions}.

With these conventions, the unknowns of the coupled model are the magnetic
flux density $\bm B$, the angular momentum $L$, the electric field $\bm E$,
and the current density $\bm J_{\mathrm d}$ in the disk. The dynamics of the
coupled electromagnetic--mechanical system are described by
\begin{subequations}
\label{eq:model}
\begin{align}
\tfrac{\mathrm{d}}{\mathrm{d}t} \bm B(\xi,t)
  &=
  -\curl\bm E(\xi,t),
  && \xi\in\Omega,
  \label{eq:dynB}
  \\
\dot L(t)
  &=
  \bm e_x^\top
  \int_{\Omega_{\mathrm d}}
  \bm r(\xi)\times
  \bigl(\bm J_{\mathrm d}(\xi,t)\times\bm B(\xi,t)\bigr)
  \,\mathrm d\xi
  +
  M_{\ext}(t),
  &&
  \label{eq:dynL}
  \\
\bm J_{\mathrm d}(\xi,t)
  &=
  \sigma_{\mathrm d}(\xi)
  \left(
  \bm E(\xi,t)
  +
  \bm v(\xi,t)\times\bm B(\xi,t)
  \right),
  && \xi\in\Omega_{\mathrm d},
  \label{eq:constraint_disk}
  \\
\curl\bigl(\nu\bm B\bigr)(\xi,t)
  &=
  \begin{cases}
  \sigma_{\mathrm{em}}(\xi)\bm E(\xi,t)
  +
  \bm J_{\mathrm{em}}(\xi,t),
  & \xi\in\Omega_{\mathrm{em}},
  \\
  \bm J_{\mathrm d}(\xi,t),
  & \xi\in\Omega_{\mathrm d},
  \\
  0,
  & \xi\in\Omega_0,
  \end{cases}
  \label{eq:constraint_ampere}
  \\
\bm E(\xi,t)\times\bm n_o(\xi)
  &=
  -\zeta(\xi)\,
  \bm n_o(\xi)\times
  \bigl(\nu(\xi)\bm B(\xi,t)\times\bm n_o(\xi)\bigr),
  && \xi\in\partial\Omega.
  \label{eq:model_boundary}
\end{align}
where
\begin{equation}\label{eq:model_abbreviations}
\bm r(\xi):=\xi-c,\qquad
\bm v(\xi,t):=I_{\mathrm d}^{-1}L(t)(\bm e_x\times\bm r(\xi)),
\qquad
\bm J_{\mathrm{em}}(\xi,t):=\chi(\xi)i(t),
\end{equation}
\end{subequations}
with $c\in\Omega_{\mathrm d}$ denoting the reference point on the rotation
axis introduced in Subsection~\ref{subsec:disk_model}.

The equations \eqref{eq:constraint_disk} and
\eqref{eq:constraint_ampere} do not contain temporal derivatives. They are
therefore algebraic constraints which have to be incorporated into the
solution concept rather than treated as evolution equations. In particular,
\eqref{eq:constraint_ampere} encodes Ampere's law on the conducting part of
the electromagnet, on the disk, and on the non-conducting region.

\section{Analytical setting and weak solutions}\label{sec:solution_concept}

This section fixes the analytical framework for the subsequent solvability
analysis by collecting the standing hypotheses and introducing a weak solution
concept compatible with the algebraic constraints of the magneto-quasistatic
model.

\subsection{Notation and function spaces}
\label{subsec:prelim}

All spaces considered throughout this paper are real Hilbert spaces. The inner
product and norm on a Hilbert space $X$ are written as
$\langle\cdot,\cdot\rangle_X$ and $\|\cdot\|_X$, respectively. For
$a,b\in\R^n$, we write $a^\top b$ for the Euclidean inner product and
$\|a\|_2:=(a^\top a)^{1/2}$ for the Euclidean norm. For vector-valued fields, this notation is used pointwise. For Hilbert spaces
$X$ and $Y$, we write
$\mathcal L(X,Y)$ for the space of bounded linear operators from $X$ to $Y$,
and simply $\mathcal L(X)$ if $X=Y$. The identity operator on $X$ is denoted by
$\id_X$, or simply by $\id$ if the underlying space is clear. The notation $X\hookrightarrow Y$ means that
$X$ is continuously embedded in $Y$, i.e., $X\subset Y$ and the identity map
$X\to Y$ is continuous.
For a possibly unbounded linear operator
$A:\dom(A)\subset X\to Y$, we denote its domain by $\dom(A)$. If $A$ is
densely defined, then its adjoint is denoted by
$A^*:\dom(A^*)\subset Y\to X$.

We follow the standard notation for Lebesgue and Sobolev spaces used
in~\cite{AdamsFournier2003}. For function spaces with values in a Hilbert
space $X$, we append ``$;X$'' to the underlying domain. Thus
$L^p(\Omega;X)$ denotes the space of $p$-integrable $X$-valued functions on
$\Omega$. Throughout the paper, integrals of $X$-valued functions are
understood in the Bochner sense; see~\cite{DiestelUhl1977}. If $T\in(0,\infty]$ and $X$ is a Banach space, then
$C_0^1((0,T);X)$ denotes the space of continuously differentiable
$X$-valued functions with compact support in the open interval $(0,T)$.

For an open set $D\subset\R^3$ and
$\bm F\in L^2(D;\R^3)$, the symbol $\curl\bm F$ denotes the weak curl whenever
it exists. We use the standard space $H(\curl,D)$ of square-integrable vector
fields with square-integrable weak curl; see~\cite{Monk2003}.

If $D\subset\R^3$ is a bounded Lipschitz domain, we denote by
$\bm n_D$ the outward unit normal vector on $\partial D$. If $D=\Omega$,
we write $\bm n_o:=\bm n_\Omega$. For smooth fields on $D$, we use the
tangential trace notation
\[
\gamma_\tau\bm F:=\bm F\times\bm n_D,
\qquad
\gamma_T\bm F:=\bm n_D\times(\bm F\times\bm n_D).
\]
These traces extend in the usual weak sense to $H(\curl,D)$ fields with values
in the corresponding Maxwell trace spaces; see
\cite[Sec.~4, Thm.~4.1]{Buffa2002}.

We also use the space of square-integrable tangential boundary fields
\[
L^2_t(\partial D)
:=
\{\bm g\in L^2(\partial D;\R^3):
\bm g^\top\bm n_D=0\ \text{a.e. on }\partial D\}.
\]

\subsection{Standing hypotheses}
\label{subsec:standing_assumptions}

Throughout the paper, unless explicitly stated otherwise, we impose the
following hypotheses.
\begin{enumerate}[(a)]
\item\label{hyp:geometry} We assume that $\Omega\subset\R^3$ is a bounded
Lipschitz domain. The subdomains $\Omega_{\mathrm d}$ and
$\Omega_{\mathrm{em}}$ are open, disjoint Lipschitz subdomains compactly
contained in $\Omega$. We set
$\Omega_{\mathrm c}:=\Omega_{\mathrm d}\cup\Omega_{\mathrm{em}}$,
$\Omega_0:=\Omega\setminus\overline{\Omega_{\mathrm c}}$.
Moreover, $\Omega_0$ is assumed to be connected with Lipschitz boundary.
\item\label{hyp:reluctivity} The magnetic reluctivity $\nu:\Omega\to\R^{3\times3}$ is 
pointwise symmetric and positive definite, and satisfies
$\nu,\nu^{-1}\in L^\infty(\Omega;\R^{3\times3})$.
\item\label{hyp:conductivity} The conductivities
$\sigma_{\mathrm d}:\Omega_{\mathrm d}\to\R^{3\times3}$ and
$\sigma_{\mathrm{em}}:\Omega_{\mathrm{em}}\to\R^{3\times3}$ are pointwise symmetric and
positive definite, with
$\sigma_{\mathrm d},\sigma_{\mathrm d}^{-1}
\in L^\infty(\Omega_{\mathrm d};\R^{3\times3})$, 
$\sigma_{\mathrm{em}},\sigma_{\mathrm{em}}^{-1}
\in L^\infty(\Omega_{\mathrm{em}};\R^{3\times3})$.
\item\label{hyp:boundary_absorption} The boundary absorption coefficient
satisfies
$\zeta\in W^{1,\infty}(\partial\Omega)$,
$\zeta\geq0$ a.e. on $\partial\Omega$.
\item\label{hyp:inputs} The winding density satisfies
$\chi\in L^\infty(\Omega;\R^{3\times m})$ and
$\supp\chi\subseteq\Omega_{\mathrm{em}}$.
The injected current and the externally applied torque satisfy
$i\in L^2_{\mathrm{loc}}(\R_{\ge0};\R^m)$,
$M_{\ext}\in L^1_{\mathrm{loc}}(\R_{\ge0};\R)$.
\item\label{hyp:disk} The disk is modeled as a rigid axisymmetric body rotating about the
$x$-axis with moment of inertia $I_{\mathrm d}>0$.
\end{enumerate}
\begin{remark}[Physical interpretation of the standing hypotheses]
\label{rem:physical_interpretation_hypotheses}
The standing hypotheses have the following physical interpretation.
\begin{enumerate}[(a)]
\item
Hypothesis~\eqref{hyp:geometry} imposes no essential geometric restriction
for the setting considered here: the idealized disk and electromagnet
geometries are Lipschitz, and the artificial computational domain $\Omega$
can be chosen Lipschitz.
\item
Hypotheses~\eqref{hyp:reluctivity} and~\eqref{hyp:conductivity} describe
linear, possibly anisotropic material laws. The symmetry and uniform positivity
of $\nu$ correspond to a positive magnetic energy density. The symmetry and
uniform positivity of $\sigma_{\mathrm d}$ and $\sigma_{\mathrm{em}}$
correspond to positive Ohmic dissipation in the disk and in the conducting
part of the electromagnet. Degenerate or singular material laws, such as
vanishing reluctivity, insulating behavior in the conducting subdomains, or
perfect conductivity, are excluded.
\item
Hypothesis~\eqref{hyp:boundary_absorption} describes the absorbing boundary
condition on the artificial outer boundary. The coefficient $\zeta$ is a
nonnegative scalar boundary absorption coefficient with the physical dimension
of a resistance, and its $W^{1,\infty}$-regularity is an analytical
compatibility condition for the weak formulation.
\item
Hypothesis~\eqref{hyp:inputs} specifies the external actuation. The winding
density $\chi$ maps the input current $i$ to the impressed current density
$\bm J_{\mathrm{em}}=\chi i$ and is supported in the modeled electromagnet.
The scalar input $M_{\ext}$ is the externally applied mechanical torque.
\item
Hypothesis~\eqref{hyp:disk} is the rigid-body approximation for the moving
conductor. The angular velocity is related to the angular momentum by
$\omega=I_{\mathrm d}^{-1}L$, and the vector field
$\bm e_x\times\bm r$ introduced in Subsection~\ref{subsec:disk_model} is the
spatial part of the corresponding rigid-body velocity field.
\end{enumerate}
\end{remark}

We next record several conventions and consequences of the standing
hypotheses. Since $\Omega_{\mathrm d}$ is bounded by
Hypothesis~\eqref{hyp:geometry}, the vector field introduced in
Subsection~\ref{subsec:disk_model} satisfies
$\xi\mapsto \bm e_x\times\bm r(\xi)
\in L^\infty(\Omega_{\mathrm d};\R^3)$.
Based on Hypotheses~\eqref{hyp:geometry} and~\eqref{hyp:conductivity}, we
define the conductivity on the conducting region $\Omega_{\mathrm c}$ by
\begin{equation}
\label{eq:sigma_on_omega_c}
\sigma
:=
\charFunction{\Omega_{\mathrm d}}\sigma_{\mathrm d}
+
\charFunction{\Omega_{\mathrm{em}}}\sigma_{\mathrm{em}}
\quad\text{on }\Omega_{\mathrm c},
\end{equation}
where $\sigma_{\mathrm d}$ and $\sigma_{\mathrm{em}}$ are understood as
extended by zero to $\Omega_{\mathrm c}$. Then $\sigma$ is symmetric and
uniformly positive definite on $\Omega_{\mathrm c}$. Moreover,
\[
\sigma,\sigma^{-1}\in L^\infty(\Omega_{\mathrm c};\R^{3\times3}),
\qquad
\sigma^{-1}
=
\charFunction{\Omega_{\mathrm d}}\sigma_{\mathrm d}^{-1}
+
\charFunction{\Omega_{\mathrm{em}}}\sigma_{\mathrm{em}}^{-1}
\quad\text{on }\Omega_{\mathrm c}.
\]
The constant boundary absorption coefficients mentioned in
Remark~\ref{rem:physical_interpretation_hypotheses} are covered by
Hypothesis~\eqref{hyp:boundary_absorption}. In particular, the lossless case
$\zeta\equiv0$ is included. Then the Silver--Müller-type boundary
condition reduces to the homogeneous tangential boundary condition
$\gamma_\tau\bm E=0$ on $\partial\Omega$.

Finally, by \eqref{eq:sigma_on_omega_c}, $\sigma$ is symmetric and uniformly
positive definite on $\Omega_{\mathrm c}$. Hence the pointwise symmetric
square roots $\sigma^{1/2}$ and $\sigma^{-1/2}$ are well defined and belong
to $L^\infty(\Omega_{\mathrm c};\R^{3\times3})$.

\subsection{Magnetic constraints, finite-dissipation spaces, and weak solutions}
\label{subsec:weak_solution}

We first introduce the preliminary class of magnetic fields with finite
instantaneous dissipation,
\[
\mathcal D_{\mathrm{fd}}
:=
\setdef{
\bm B\in L^2(\Omega;\R^3)}{
\nu\bm B\in H(\curl,\Omega),\,\gamma_T(\nu\bm B)\in L^2_t(\partial\Omega)
\text{ and }\curl(\nu\bm B)|_{\Omega_0}=0
\text{ distributionally}
}.
\]
We define the magnetic energy space and its inner product by
\begin{equation*}
X_B
:=
\overline{\mathcal D_{\mathrm{fd}}}^{\,L^2(\Omega;\R^3)},
\qquad
\langle \bm B,\bm C\rangle_{X_B}
:=
\int_\Omega
(\nu\bm B)^\top\bm C\,\mathrm d\xi.
\end{equation*}
By Hypothesis~\eqref{hyp:reluctivity}, this inner product induces a norm
equivalent to the $L^2(\Omega;\R^3)$-norm. Thus $X_B$ is a Hilbert space.
Moreover, since the set of all
$\bm B\in L^2(\Omega;\R^3)$ satisfying
$\curl(\nu\bm B)=0$ in $\mathcal D'(\Omega_0;\R^3)$ is closed in
$L^2(\Omega;\R^3)$, every element of $X_B$ satisfies this magnetic constraint
in $\Omega_0$.

The boundary dissipation is described by the weighted tangential Hilbert space
$L^2_t(\partial\Omega,\zeta\,\mathrm dS)$,
that is, the space of tangential fields which are square-integrable with
respect to the measure $\zeta\,\mathrm dS$, where fields which agree
$\zeta\,\mathrm dS$-almost everywhere are identified. Its inner product is
\[
\langle \bm h,\bm k\rangle_{L^2_t(\partial\Omega,\zeta\,\mathrm dS)}
:=
\int_{\partial\Omega}
\zeta\,\bm h^\top\bm k\,\mathrm dS.
\]
The lossless case $\zeta\equiv0$ is included; in this case
$L^2_t(\partial\Omega,\zeta\,\mathrm dS)\cong\{0\}$.

We define
\[
\mathcal V_B^0
:=
\setdef{
\left(
\bm B,
\sigma^{-1/2}
\left.\curl(\nu\bm B)\right|_{\Omega_{\mathrm c}},
\gamma_T(\nu\bm B)
\right)}{\bm B\in\mathcal D_{\mathrm{fd}}
}
\subset
X_B\times L^2(\Omega_{\mathrm c};\R^3)
\times L^2_t(\partial\Omega,\zeta\,\mathrm dS)
\]
and set
\begin{align*}
V_B
&:=
\overline{\mathcal V_B^0}^{\,X_B\times L^2(\Omega_{\mathrm c};\R^3)
\times L^2_t(\partial\Omega,\zeta\,\mathrm dS)},\\
\|\mathfrak B\|_{V_B}
&:=
\big(
\|\bm B\|_{X_B}^2
+
\|\bm q\|_{L^2(\Omega_{\mathrm c};\R^3)}^2
+
\|\bm h\|_{L^2_t(\partial\Omega,\zeta\,\mathrm dS)}^2
\big)^{1/2},
\quad
\mathfrak B=(\bm B,\bm q,\bm h).
\end{align*}
Thus $V_B$ is a Hilbert space. The first component $\bm B$ represents the
magnetic flux density, the second component $\bm q$ represents the
conductivity-weighted conductive current, and the third component $\bm h$
represents the tangential magnetic boundary field. On the dense subspace
$\mathcal V_B^0$ these components are given by
$\bm q
=
\sigma^{-1/2}\left.\curl(\nu\bm B)\right|_{\Omega_{\mathrm c}}$,
$\bm h
=
\gamma_T(\nu\bm B)$.
In particular, $X_B$ represents magnetic states with finite magnetic energy,
whereas $V_B$ represents magnetic states together with finite bulk and
boundary dissipation variables.

\begin{lemma}[Uniqueness of finite-dissipation representatives]
\label{lem:finite_dissipation_representatives_unique}
Let $(0,\bm q,\bm h)\in V_B$. Then $\bm q=0$ and
$\zeta\bm h=0$ almost everywhere on $\partial\Omega$. In particular,
$\bm h=0$ in $L^2_t(\partial\Omega,\zeta\,\mathrm dS)$.
\end{lemma}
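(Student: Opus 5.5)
This is a closability statement: the map $\bm B\mapsto(\sigma^{-1/2}\curl(\nu\bm B)|_{\Omega_{\mathrm c}},\gamma_T(\nu\bm B))$ on $\mathcal D_{\mathrm{fd}}$ should be closable. We prove it by duality against smooth test fields, integrating by parts to move the curl onto the test function.

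Take a sequence $\bm B_n\in\mathcal D_{\mathrm{fd}}$ with
\[
\bm B_n\to 0 \text{ in } L^2(\Omega;\R^3),\qquad
\bm q_n:=\sigma^{-1/2}\curl(\nu\bm B_n)|_{\Omega_{\mathrm c}}\to\bm q \text{ in } L^2(\Omega_{\mathrm c};\R^3),\qquad
\bm h_n:=\gamma_T(\nu\bm B_n)\to\bm h \text{ in } L^2_t(\partial\Omega,\zeta\,\mathrm dS).
\]
Set $\bm H_n:=\nu\bm B_n$. Then $\bm H_n\to0$ in $L^2$ by Hypothesis~\eqref{hyp:reluctivity}. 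Moreover $\curl\bm H_n$ vanishes on $\Omega_0$. Since $\partial\Omega_{\mathrm c}$ has measure zero, $\curl\bm H_n=\charFunction{\Omega_{\mathrm c}}\sigma^{1/2}\bm q_n$ on $\Omega$. This converges in $L^2(\Omega)$ to $\bm G:=\charFunction{\Omega_{\mathrm c}}\sigma^{1/2}\bm q$.

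\emph{Interior step.} Test against $\bm\phi\in C_c^\infty(\Omega;\R^3)$:
\[
\int_\Omega(\curl\bm H_n)^\top\bm\phi\,\mathrm d\xi=\int_\Omega\bm H_n^\top\curl\bm\phi\,\mathrm d\xi\to0 .
\]
Hence $\bm G=0$ a.e. Since $\sigma^{1/2}$ is invertible, $\bm q=0$. In particular $\curl\bm H_n\to0$ in $L^2(\Omega)$, so $\bm H_n\to0$ in $H(\curl,\Omega)$.

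\emph{Boundary step.} Use the Green formula for $H(\curl,\Omega)$, valid here because $\gamma_T\bm H_n\in L^2_t(\partial\Omega)$, with test fields $\bm\phi\in C^\infty(\overline\Omega;\R^3)$:
\[
\int_{\partial\Omega}(\bm n_o\times\bm\phi)^\top\bm H_n\,\mathrm dS
=\pm\Bigl(\int_\Omega(\curl\bm H_n)^\top\bm\phi\,\mathrm d\xi-\int_\Omega\bm H_n^\top\curl\bm\phi\,\mathrm d\xi\Bigr)\to0 .
\]
The left side depends only on $\gamma_T\bm H_n=\bm h_n$. Replacing $\bm\phi$ by $\zeta\bm\psi$, with $\zeta$ extended into $\Omega$ as a $W^{1,\infty}$ function (this is where Hypothesis~\eqref{hyp:boundary_absorption} enters), we obtain
\[
\int_{\partial\Omega}\zeta\,\bm h_n^\top\bm w\,\mathrm dS\to0,
\]
where $\bm w$ runs through the tangential traces $\gamma_T\bm\psi$ (up to rotation by $\bm n_o\times$). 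On the other hand, convergence in $L^2_t(\partial\Omega,\zeta\,\mathrm dS)$ gives that the same integral tends to $\int_{\partial\Omega}\zeta\,\bm h^\top\bm w\,\mathrm dS$. Hence
\[
\int_{\partial\Omega}\zeta\,\bm h^\top\bm w\,\mathrm dS=0 \qquad\text{for all such }\bm w .
\]

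\emph{Main obstacle.} It remains to conclude $\zeta\bm h=0$. We need that these traces are dense in $L^2_t(\partial\Omega,\zeta\,\mathrm dS)$, at least after multiplying by $\zeta$. On a Lipschitz boundary the normal is only $L^\infty$, so $\bm w=\bm n_o\times(\bm\psi\times\bm n_o)$ is not smooth. We argue as follows. Restrictions of smooth $\bm\psi$ are dense in $L^2(\partial\Omega;\R^3,\zeta\,\mathrm dS)$, and the pointwise projection $\bm\psi\mapsto\gamma_T\bm\psi$ is a bounded map onto the tangential fields. Hence the tangential traces are dense in $L^2_t(\partial\Omega,\zeta\,\mathrm dS)$, and the same holds for their rotations by $\bm n_o\times$.

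Choosing $\bm w_k\to\bm h$ in this weighted space yields $\int_{\partial\Omega}\zeta|\bm h|^2\,\mathrm dS=0$. Therefore $\zeta\bm h=0$ a.e., which means $\bm h=0$ in $L^2_t(\partial\Omega,\zeta\,\mathrm dS)$.
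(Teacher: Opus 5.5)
Your proof is correct and follows the paper's argument. Closedness of $\curl$ gives $\bm q=0$ and $\nu\bm B_n\to0$ in $H(\curl,\Omega)$, and the Green/trace identity tested against $\zeta\bm\psi$, which is where $\zeta\in W^{1,\infty}(\partial\Omega)$ enters, then identifies the weighted boundary limit. Your explicit pairing of $\bm h_n$ with $\bm n_o\times\bm\psi$ is the precise form of the paper's appeal to convergence of $\gamma_T(\nu\bm B_n)$ in $H^{-1/2}(\partial\Omega;\R^3)$: on a merely Lipschitz boundary it is $\bm n_o\times(\nu\bm B_n)$, not $\gamma_T(\nu\bm B_n)$, that Green's formula pairs with $H^{1/2}$ traces. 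Your density argument in $L^2_t(\partial\Omega,\zeta\,\mathrm dS)$ plays the role of the paper's ``$\zeta\bm h\in L^1$ vanishes distributionally'' step.
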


\begin{proof}
By the definition of $V_B$, there exists a sequence
$(\bm B_n)_{n\in\natNum}\subset\mathcal D_{\mathrm{fd}}$ such that
\[\bm B_n\to0\;\text{ in }X_B,\qquad
\sigma^{-1/2}\left.\curl(\nu\bm B_n)\right|_{\Omega_{\mathrm c}}
\to \bm q
\;\text{ in }L^2(\Omega_{\mathrm c};\R^3),\qquad
\gamma_T(\nu\bm B_n)\to\bm h
\;\text{ in }L^2_t(\partial\Omega,\zeta\,\mathrm dS).
\]
Set $\bm H_n:=\nu\bm B_n$. Since the $X_B$-norm is equivalent to the
$L^2(\Omega;\R^3)$-norm on $X_B$ and
$\nu\in L^\infty(\Omega;\R^{3\times3})$, we have
$\bm H_n\to0$ in $L^2(\Omega;\R^3)$.
Moreover,
$\left.\curl\bm H_n\right|_{\Omega_{\mathrm c}}
\to \sigma^{1/2}\bm q$ in $L^2(\Omega_{\mathrm c};\R^3)$.
Since $\Omega_{\mathrm c}=\Omega_{\mathrm d}\cup\Omega_{\mathrm{em}}$ is a
disjoint union, this convergence holds on both
$\Omega_{\mathrm d}$ and $\Omega_{\mathrm{em}}$. The curl operator is closed on
each of these open sets. Hence
$\sigma^{1/2}\bm q\vert_{\Omega_{\mathrm d}}=0$ and
$\sigma^{1/2}\bm q\vert_{\Omega_{\mathrm{em}}}=0$.
Thus $\sigma^{1/2}\bm q=0$ in $L^2(\Omega_{\mathrm c};\R^3)$.
Since $\sigma^{1/2}$ is uniformly positive definite on $\Omega_{\mathrm c}$,
we obtain $\bm q=0$.

It remains to identify the boundary limit. Since
$\bm B_n\in\mathcal D_{\mathrm{fd}}$, we have
$\curl\bm H_n\vert_{\Omega_0}=0$ distributionally. Together with the
convergence already shown, this gives
$\curl\bm H_n\to0$ in $L^2(\Omega;\R^3)$, hence
$\bm H_n\to0$ in $H(\curl;\Omega)$. By the continuity of the weak tangential
trace, see \cite[Theorem~3.29]{Monk2003},
$\gamma_T\bm H_n\to0$ in $H^{-1/2}(\partial\Omega;\R^3)$.

On the other hand, $\gamma_T\bm H_n\to\bm h$ in
$L^2_t(\partial\Omega,\zeta\,\mathrm dS)$. Let $\bm\psi$ be a smooth boundary
test field. Since $\zeta\in W^{1,\infty}(\partial\Omega)$, multiplication by
$\zeta$ preserves $H^{1/2}(\partial\Omega;\R^3)$, and hence
$\zeta\bm\psi\in H^{1/2}(\partial\Omega;\R^3)$. Therefore
\[
\int_{\partial\Omega}\zeta\bm h^\top\bm\psi\,\mathrm dS
=
\lim_{n\to\infty}
\int_{\partial\Omega}\zeta(\gamma_T\bm H_n)^\top\bm\psi\,\mathrm dS
=
0,
\]
where the last equality follows from
$\gamma_T\bm H_n\to0$ in $H^{-1/2}(\partial\Omega;\R^3)$. Thus
$\zeta\bm h=0$ distributionally on $\partial\Omega$. Since
$\bm h\in L^2_t(\partial\Omega,\zeta\,\mathrm dS)$ and
$\zeta\in L^\infty(\partial\Omega)$, we have
$\zeta\bm h\in L^1(\partial\Omega;\R^3)$. Hence
$\zeta\bm h=0$ almost everywhere. This is equivalent to
$\bm h=0$ in $L^2_t(\partial\Omega,\zeta\,\mathrm dS)$.
\end{proof}
By Lemma~\ref{lem:finite_dissipation_representatives_unique}, the projection
$(\bm B,\bm q,\bm h)\mapsto \bm B$
is injective on $V_B$. Whenever the Hilbert triple
$V_B\hookrightarrow X_B\hookrightarrow V_B'$
is used, we identify $V_B$ with this first-component image. Under this
identification, the second and third components of an element of $V_B$ are
still denoted by $\bm q$ and $\bm h$.

For the weak formulation of Faraday's law we use the test space
\[
\mathcal W
:=
\setdef{
\bm\varphi\in H^1(\Omega;\R^3)}{
(\curl\bm\varphi)\vert_{\Omega_0}=0
}.
\]
The space $\mathcal W$ is a closed subspace of $H^1(\Omega;\R^3)$ and is
equipped with the induced $H^1$-norm. This choice reflects that the weak
solution concept below contains the electric field only on the conducting
region $\Omega_{\mathrm c}$. Hence Faraday's law is tested only against
variations whose curl vanishes in the non-conducting subdomain $\Omega_0$.

We shall also use the canonical embedding of $X_B$ into $V_B'$. Namely, for
$\bm Z\in X_B$ and $\mathfrak C=(\bm C,\bm p,\bm k)\in V_B$, we set
$\langle \bm Z,\mathfrak C\rangle_{V_B',V_B}
:=
\langle \bm Z,\bm C\rangle_{X_B}$.
\begin{definition}[Weak solution of the eddy current brake]
\label{def:weak_solution}
Assume Hypotheses~\ref{hyp:geometry}--\ref{hyp:disk}. Let
$T\in(0,\infty)$ and let $(\bm B_0,L_0)\in X_B\times\R$ be given. A
quadruple $(\mathfrak B,L,\bm E,\bm J_{\mathrm d})$,
$\mathfrak B=(\bm B,\bm q,\bm h)$,
is called a weak solution of the eddy current brake on $[0,T]$ if
\begin{align*}
\mathfrak B&\in L^2([0,T];V_B),
&\bm B&\in H^1([0,T];V_B')\cap C([0,T];X_B),
&L&\in W^{1,1}([0,T]),\\
\bm E&\in L^2([0,T];L^2(\Omega_{\mathrm c};\R^3)),
&\bm J_{\mathrm d}&\in L^2([0,T];L^2(\Omega_{\mathrm d};\R^3)),
\end{align*}
with $\bm B(0)=\bm B_0$ and $L(0)=L_0$, and if the following conditions hold.

First, Faraday's law together with the Silver--Müller boundary condition is
satisfied in the finite-dissipation form sense: for almost every
$t\in(0,T)$ and every
$\mathfrak C=(\bm C,\bm p,\bm k)\in V_B$,
\[
\frac{\mathrm d}{\mathrm dt}
\langle \bm B(t),\bm C\rangle_{X_B}
+
\int_{\Omega_{\mathrm c}}
\bigl(\sigma^{1/2}\bm E(t)\bigr)^\top\bm p
\,\mathrm d\xi
+
\int_{\partial\Omega}
\zeta
\bm h(t)^\top\bm k
\,\mathrm dS
=
0.
\]
Second, the algebraic constraints in the conducting subdomains are satisfied for almost every $t\in(0,T)$:
\begin{align*}
\sigma_{\mathrm{em}}^{1/2}
\left.\bm q(t)\right|_{\Omega_{\mathrm{em}}}
&=
\sigma_{\mathrm{em}}
\left.\bm E(t)\right|_{\Omega_{\mathrm{em}}}
+
\chi i(t)
&&\text{in }L^2(\Omega_{\mathrm{em}};\R^3),\\
\bm J_{\mathrm d}(t)
=
\sigma_{\mathrm d}^{1/2}
\left.\bm q(t)\right|_{\Omega_{\mathrm d}}
&=
\sigma_{\mathrm d}
\left(
\left.\bm E(t)\right|_{\Omega_{\mathrm d}}
+
I_{\mathrm d}^{-1}L(t)(\bm e_x\times\bm r)
\times
\left.\bm B(t)\right|_{\Omega_{\mathrm d}}
\right)
&&\text{in }L^2(\Omega_{\mathrm d};\R^3).
\end{align*}
Third, the angular momentum balance is satisfied in the sense that, for almost every $t\in(0,T)$,
\[
\dot L(t)
=
\bm e_x^\top
\int_{\Omega_{\mathrm d}}
\bm r(\xi)\times
\bigl(\bm J_{\mathrm d}(\xi,t)\times\bm B(\xi,t)\bigr)
\,\mathrm d\xi
+
M_{\ext}(t).
\]
Moreover, a quadruple
$(\mathfrak B,L,\bm E,\bm J_{\mathrm d})$
is called a weak solution on $\R_{\ge0}$ if its restriction to $[0,T]$ is a
weak solution in the sense of Definition~\ref{def:weak_solution} for every
finite $T>0$.
\end{definition}
\begin{proposition}[Energy balance for weak solutions]
\label{prop:energy_balance_weak_solution}
Let $(\mathfrak B,L,\bm E,\bm J_{\mathrm d})$ be a weak solution on $[0,T]$
in the sense of Definition~\ref{def:weak_solution}. Then, for all
$t\in[0,T]$,
\[
\begin{split}
\mathcal E(t)
&+
\int_0^t
\left(
\|\bm q(\tau)\|_{L^2(\Omega_{\mathrm c};\R^3)}^2
+
\|\bm h(\tau)\|_{L^2_t(\partial\Omega,\zeta\,\mathrm dS)}^2
\right)
\,\mathrm d\tau
\\
&=
\mathcal E(0)
+
\int_0^t
\int_{\Omega_{\mathrm{em}}}
\bigl(\sigma^{-1/2}\chi i(\tau)\bigr)^\top
\left.\bm q(\tau)\right|_{\Omega_{\mathrm{em}}}
\,\mathrm d\xi\,\mathrm d\tau
+
\int_0^t
I_{\mathrm d}^{-1}L(\tau)M_{\ext}(\tau)
\,\mathrm d\tau,
\end{split}
\]
where
$\mathcal E(t)
:=
\tfrac12\|\bm B(t)\|_{X_B}^2
+
\tfrac{1}{2I_{\mathrm d}}L(t)^2$.
\end{proposition}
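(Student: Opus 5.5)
We test Faraday's law with the solution itself, $\mathfrak C=\mathfrak B(t)$, and combine the result with the angular momentum balance multiplied by $I_{\mathrm d}^{-1}L$. The electromechanical coupling terms should then cancel.

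First, since $\mathfrak B\in L^2([0,T];V_B)$ and $\bm B\in H^1([0,T];V_B')$, the Lions--Magenes lemma for the Gelfand triple $V_B\hookrightarrow X_B\hookrightarrow V_B'$ gives $t\mapsto\|\bm B(t)\|_{X_B}^2$ absolutely continuous with
\[
\tfrac{\mathrm d}{\mathrm dt}\tfrac12\|\bm B(t)\|_{X_B}^2=\langle \dot{\bm B}(t),\mathfrak B(t)\rangle_{V_B',V_B}.
\]
The weak formulation identifies $\dot{\bm B}(t)$ as the functional $\mathfrak C\mapsto-\int_{\Omega_{\mathrm c}}(\sigma^{1/2}\bm E)^\top\bm p-\int_{\partial\Omega}\zeta\bm h^\top\bm k$. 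This functional is bounded on $V_B$ with an $L^2$-in-time bound, which is consistent with the $H^1$ regularity. Here the identification of $V_B$ with its first components (Lemma~\ref{lem:finite_dissipation_representatives_unique}) is used to make the triple well defined. Inserting $\mathfrak C=\mathfrak B(t)$ yields
\[
\tfrac{\mathrm d}{\mathrm dt}\tfrac12\|\bm B\|_{X_B}^2=-\int_{\Omega_{\mathrm c}}(\sigma^{1/2}\bm E)^\top\bm q\,\mathrm d\xi-\|\bm h\|^2_{L^2_t(\partial\Omega,\zeta\,\mathrm dS)}.
\]

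Next, we rewrite $\sigma^{1/2}\bm E$ using the algebraic constraints. On $\Omega_{\mathrm{em}}$ we have $\sigma^{1/2}\bm E=\bm q-\sigma^{-1/2}\chi i$. On $\Omega_{\mathrm d}$ we have $\sigma^{1/2}\bm E=\bm q-\sigma_{\mathrm d}^{1/2}\,\omega(\bm e_x\times\bm r)\times\bm B$, where $\omega=I_{\mathrm d}^{-1}L$. Hence
\[
-\int_{\Omega_{\mathrm c}}(\sigma^{1/2}\bm E)^\top\bm q=-\|\bm q\|^2+\int_{\Omega_{\mathrm{em}}}(\sigma^{-1/2}\chi i)^\top\bm q+\omega\int_{\Omega_{\mathrm d}}\bigl((\bm e_x\times\bm r)\times\bm B\bigr)^\top\bm J_{\mathrm d},
\]
using $\sigma_{\mathrm d}^{1/2}\bm q=\bm J_{\mathrm d}$ and the symmetry of $\sigma_{\mathrm d}^{1/2}$.

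For the mechanical part, $L\in W^{1,1}$ gives $\tfrac{\mathrm d}{\mathrm dt}\tfrac{1}{2I_{\mathrm d}}L^2=\omega\dot L$ almost everywhere. Substituting the torque balance gives
\[
\omega\,\bm e_x^\top\!\int\bm r\times(\bm J_{\mathrm d}\times\bm B)+\omega M_{\ext}.
\]
By the scalar triple product identity, $\bm e_x\cdot(\bm r\times\bm w)=(\bm e_x\times\bm r)\cdot\bm w$ with $\bm w=\bm J_{\mathrm d}\times\bm B$. A second application gives $(\bm e_x\times\bm r)\cdot(\bm J_{\mathrm d}\times\bm B)=-\bm J_{\mathrm d}\cdot\bigl((\bm e_x\times\bm r)\times\bm B\bigr)$. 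This exactly cancels the motional term from the magnetic part. Adding the two balances and integrating from $0$ to $t$, using the absolute continuity of both parts of $\mathcal E$, yields the claim. Each term is integrable: $\bm q,\bm h,i\in L^2$, $L$ is bounded, and $M_{\ext}\in L^1$. For the torque integrand, $\bm J_{\mathrm d}\in L^2_tL^2_x$ and $\bm B\in C([0,T];L^2)$ give $L^1$ in time.

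The main obstacle is the first step: rigorously justifying the chain rule with the test function $\mathfrak B(t)$ in the nonstandard triple. This requires checking that $\dot{\bm B}$ really lies in $L^2(0,T;V_B')$ and acts through the pairing consistent with the embedding $X_B\hookrightarrow V_B'$ defined in the paper. Only then can the standard Lions--Magenes result be invoked verbatim.
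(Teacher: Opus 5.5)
Your proposal is correct and follows the paper's proof essentially step by step. You test Faraday's law with $\mathfrak B(t)$ via the Hilbert-triple chain rule, eliminate $\sigma^{1/2}\bm E$ through the two algebraic constraints, and cancel the motional term against $I_{\mathrm d}^{-1}L\dot L$ using the scalar triple product identity. The only detail the paper makes explicit that you leave implicit is a separability argument: it ensures the variational identity holds on one common full-measure set of times for all test functions, which is what justifies inserting the time-dependent test function $\mathfrak B(t)$.
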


\begin{proof}
Since $V_B$ is separable, the variational identity can be realized on a
single full-measure subset of $(0,T)$ for all test functions in $V_B$: first
one proves this on a countable dense subset of $V_B$, and then extends it by
continuity in the test function. Hence, for almost every $t\in(0,T)$, we may
choose the test function $\mathfrak C=\mathfrak B(t)$ in the
finite-dissipation form of Faraday's law and obtain
\[
\frac12\frac{\mathrm d}{\mathrm dt}
\|\bm B(t)\|_{X_B}^2
+
\int_{\Omega_{\mathrm c}}
\bigl(\sigma^{1/2}\bm E(t)\bigr)^\top\bm q(t)
\,\mathrm d\xi
+
\|\bm h(t)\|_{L^2_t(\partial\Omega,\zeta\,\mathrm dS)}^2
=
0.
\]
Here we used the Hilbert-triple chain rule in the above first-component
identification of $V_B$; see
\cite[Ch.~XVIII, Sec.~1, Thm.~2]{DautrayLionsVol5}.

We rewrite the conductive term by using the algebraic constraints. On
$\Omega_{\mathrm{em}}$ we have
\[
\left.\bm q(t)\right|_{\Omega_{\mathrm{em}}}
=
\sigma_{\mathrm{em}}^{1/2}
\left.\bm E(t)\right|_{\Omega_{\mathrm{em}}}
+
\sigma_{\mathrm{em}}^{-1/2}\chi i(t),
\]
and hence
\[
\bigl(\sigma_{\mathrm{em}}^{1/2}\bm E(t)\bigr)^\top
\left.\bm q(t)\right|_{\Omega_{\mathrm{em}}}
=
\left\|
\left.\bm q(t)\right|_{\Omega_{\mathrm{em}}}
\right\|_2^2
-
\bigl(\sigma_{\mathrm{em}}^{-1/2}\chi i(t)\bigr)^\top
\left.\bm q(t)\right|_{\Omega_{\mathrm{em}}}.
\]
On $\Omega_{\mathrm d}$ the disk constraint gives
\[
\left.\bm q(t)\right|_{\Omega_{\mathrm d}}
=
\sigma_{\mathrm d}^{1/2}
\left(
\left.\bm E(t)\right|_{\Omega_{\mathrm d}}
+
\bm v(t)\times\left.\bm B(t)\right|_{\Omega_{\mathrm d}}
\right),
\]
where $\bm v(t)=I_{\mathrm d}^{-1}L(t)(\bm e_x\times\bm r)$. Therefore
\[
\bigl(\sigma_{\mathrm d}^{1/2}\bm E(t)\bigr)^\top
\left.\bm q(t)\right|_{\Omega_{\mathrm d}}
=
\left\|
\left.\bm q(t)\right|_{\Omega_{\mathrm d}}
\right\|_2^2
-
\bigl(
\bm v(t)\times\left.\bm B(t)\right|_{\Omega_{\mathrm d}}
\bigr)^\top
\bm J_{\mathrm d}(t).
\]
Consequently,
\begin{multline*}
\frac12\frac{\mathrm d}{\mathrm dt}
\|\bm B(t)\|_{X_B}^2
+
\|\bm q(t)\|_{L^2(\Omega_{\mathrm c};\R^3)}^2
+
\|\bm h(t)\|_{L^2_t(\partial\Omega,\zeta\,\mathrm dS)}^2\\
=
\int_{\Omega_{\mathrm{em}}}
\bigl(\sigma_{\mathrm{em}}^{-1/2}\chi i(t)\bigr)^\top
\left.\bm q(t)\right|_{\Omega_{\mathrm{em}}}
\,\mathrm d\xi
+
\int_{\Omega_{\mathrm d}}
\bigl(
\bm v(t)\times\left.\bm B(t)\right|_{\Omega_{\mathrm d}}
\bigr)^\top
\bm J_{\mathrm d}(t)
\,\mathrm d\xi.
\end{multline*}
The angular momentum balance gives
\[
\frac{\mathrm d}{\mathrm dt}
\frac{1}{2I_{\mathrm d}}L(t)^2
=
I_{\mathrm d}^{-1}L(t)
\bm e_x^\top
\int_{\Omega_{\mathrm d}}
\bm r(\xi)\times
\bigl(\bm J_{\mathrm d}(\xi,t)\times\bm B(\xi,t)\bigr)
\,\mathrm d\xi
+
I_{\mathrm d}^{-1}L(t)M_{\ext}(t).
\]
Using $\bm v(t)=I_{\mathrm d}^{-1}L(t)(\bm e_x\times\bm r)$ and the scalar
triple product identity, we obtain
\[
I_{\mathrm d}^{-1}L(t)
\bm e_x^\top
\int_{\Omega_{\mathrm d}}
\bm r\times
\bigl(\bm J_{\mathrm d}\times\bm B\bigr)
\,\mathrm d\xi
=
-
\int_{\Omega_{\mathrm d}}
\bigl(
\bm v(t)\times\left.\bm B(t)\right|_{\Omega_{\mathrm d}}
\bigr)^\top
\bm J_{\mathrm d}(t)
\,\mathrm d\xi.
\]
Thus the Lorentz coupling terms cancel in the sum of the magnetic and
mechanical energy identities. Integrating the resulting differential identity
from $0$ to $t$ gives the asserted balance.
\end{proof}
\begin{remark}[Well-definedness of the weak formulation]
All terms in Definition~\ref{def:weak_solution} are well defined. Indeed,
$\mathfrak B\in L^2([0,T];V_B)$ implies
$\bm B\in L^2([0,T];X_B)$, $\bm q\in L^2([0,T];L^2(\Omega_{\mathrm c};\R^3))$, and
$\bm h\in L^2([0,T];L^2_t(\partial\Omega,\zeta\,\mathrm dS))$. Moreover, for every $\mathfrak C=(\bm C,\bm p,\bm k)\in V_B$, the function
$t\mapsto\langle\bm B(t),\bm C\rangle_{X_B}$ is absolutely continuous and
its derivative is
$\langle\dot{\bm B}(t),\mathfrak C\rangle_{V_B',V_B}$ for almost every
$t\in(0,T)$. Hence the first term in the finite-dissipation form of
Faraday's law is well defined.

If $\mathfrak C=(\bm C,\bm p,\bm k)\in V_B$, then
$\bm p\in L^2(\Omega_{\mathrm c};\R^3)$ and $\bm k\in L^2_t(\partial\Omega,\zeta\,\mathrm dS)$. Since
$\bm E\in L^2([0,T];L^2(\Omega_{\mathrm c};\R^3))$ and
$\sigma^{1/2}\in L^\infty(\Omega_{\mathrm c};\R^{3\times3})$, the conductive
term in the form identity is well defined. The boundary term is precisely the
inner product in $L^2_t(\partial\Omega,\zeta\,\mathrm dS)$.

The algebraic disk constraint is meaningful because
\[
I_{\mathrm d}^{-1}L(\bm e_x\times\bm r)\times\bm B
\in L^2([0,T];L^2(\Omega_{\mathrm d};\R^3))
\]
whenever $L\in C([0,T])$ and $\bm B\in L^2([0,T];X_B)$. Finally, the torque term
belongs to $L^1([0,T])$ because $\bm r\in L^\infty(\Omega_{\mathrm d};\R^3)$,
$\bm B\in C([0,T];X_B)$, and
$\bm J_{\mathrm d}\in L^2([0,T];L^2(\Omega_{\mathrm d};\R^3))$.
\end{remark}

\begin{remark}[Relation to the space-time Faraday formulation]
Let $(\mathfrak B,L,\bm E,\bm J_{\mathrm d})$ be a weak solution in the sense
of Definition~\ref{def:weak_solution}. Then, for every $\bm\varphi\in C_0^1((0,T);\mathcal W)$, the space-time Faraday identity
\begin{multline*}
-\int_0^T
\int_\Omega
\bm B(\xi,t)^\top\tfrac{\mathrm{d}}{\mathrm{d}t}\bm\varphi(\xi,t)
\,\mathrm d\xi\,\mathrm dt
+
\int_0^T
\int_{\Omega_{\mathrm c}}
\bm E(\xi,t)^\top\curl\bm\varphi(\xi,t)
\,\mathrm d\xi\,\mathrm dt
\\
+
\int_0^T
\int_{\partial\Omega}
\zeta(\xi)
\bm h(\xi,t)^\top\gamma_T\bm\varphi(\xi,t)
\,\mathrm dS(\xi)\,\mathrm dt
=
0.
\end{multline*}
Indeed, for such a
test function one has
\[
\left(
\nu^{-1}\bm\varphi,
\sigma^{-1/2}\left.\curl\bm\varphi\right|_{\Omega_{\mathrm c}},
\gamma_T\bm\varphi
\right)
\in V_B,
\]
and the assertion follows by inserting this test function into the
finite-dissipation form of Faraday's law and integrating in time.
\end{remark}

\begin{remark}[Propagation of the Maxwell constraints]
\label{rem:maxwell_constraints}
The weak evolution preserves the magnetic Gauss constraint: the distributional
divergence of $\bm B$ is constant in time, that is,
$\divg\bm B(t)=\divg\bm B_0$ for all $t\in[0,T]$. In particular, if
$\divg\bm B_0=0$, then $\divg\bm B(t)=0$ for every $t\in[0,T]$.

Indeed, in the space-time Faraday identity one may choose test functions of
the form $\bm\varphi(\xi,t)=\eta(t)\nabla\psi(\xi)$, where
$\eta\in C_0^1((0,T))$ and $\psi\in C_c^\infty(\Omega)$. Since
$\curl\nabla\psi=0$ and $\nabla\psi$ vanishes in a neighborhood of
$\partial\Omega$, both the electric-field term and the boundary term vanish,
and hence
\[
-\int_0^T \eta'(t)
\int_\Omega \bm B(\xi,t)^\top\nabla\psi(\xi)\,\mathrm d\xi\,\mathrm dt
=0.
\]
Thus, for every $\psi\in C_c^\infty(\Omega)$, the map
$t\mapsto\int_\Omega\bm B(\xi,t)^\top\nabla\psi(\xi)\,\mathrm d\xi$ is
constant. Since $\bm B\in C([0,T];X_B)$, the $X_B$-norm is equivalent to the
$L^2(\Omega;\mathbb R^3)$-norm, and $\bm B(0)=\bm B_0$, this yields
$\divg\bm B(t)=\divg\bm B_0$ for every $t\in[0,T]$.

Likewise, taking the distributional divergence of Ampère's law and using
$\divg\curl=0$ shows that the total current density
\[
\bm J_{\mathrm{tot}}(t)
:=
\charFunction{\Omega_{\mathrm{em}}}
\bigl(\sigma_{\mathrm{em}}\bm E(t)+\chi i(t)\bigr)
+
\charFunction{\Omega_{\mathrm d}}\bm J_{\mathrm d}(t)
\]
satisfies $\divg\bm J_{\mathrm{tot}}(t)=0$ for almost every
$t\in(0,T)$. Thus no additional $H(\divg)$-regularity is required in
Definition~\ref{def:weak_solution}.\end{remark}

\begin{remark}[Physical interpretation of the weak solution concept]
The weak solution is formulated for the variables which occur in the
partial differential-algebraic model: the magnetic flux density $\bm B$, the
angular momentum $L$, the electric field $\bm E$ on the conducting region,
and the disk current density $\bm J_{\mathrm d}$. The finite-dissipation
representative $\mathfrak B=(\bm B,\bm q,\bm h)$ additionally records the
conductivity-weighted conductive current $\bm q$ and the tangential magnetic
boundary variable $\bm h$.

The constraint on $\Omega_{\mathrm{em}}$ is Ampere's law in the conducting
part of the electromagnet. The constraint on $\Omega_{\mathrm d}$ combines
Ampere's law in the disk with Ohm's law including the motional Lorentz term.
The magnetic constraint in the non-conducting subdomain is encoded in the
definition of $X_B$. The restriction in the test space $\mathcal W$ reflects
that the electric field is included as an unknown only on the conducting
region $\Omega_{\mathrm c}$.
\end{remark}

\begin{remark}[Consistency with classical solutions]
Every sufficiently regular solution of the pointwise model
\eqref{eq:model} is a weak solution in the sense of
Definition~\ref{def:weak_solution}. Indeed, assume that
$(\bm B,L,\bm E,\bm J_{\mathrm d})$ is sufficiently regular, satisfies
\eqref{eq:model} together with the Silver--Müller boundary condition, and that
$\bm B(t)\in\mathcal D_{\mathrm{fd}}$ for almost every $t$. Then
\[
\mathfrak B(t)
:=
\left(
\bm B(t),
\sigma^{-1/2}\left.\curl(\nu\bm B(t))\right|_{\Omega_{\mathrm c}},
\gamma_T(\nu\bm B(t))
\right)
\in V_B
\]
for almost every $t$. Moreover, the second component of $\mathfrak B(t)$ satisfies
\begin{align*}
\sigma_{\mathrm{em}}^{1/2}
\left.
\left(
\sigma^{-1/2}\curl(\nu\bm B(t))
\right)
\right|_{\Omega_{\mathrm{em}}}
&=
\left.\curl(\nu\bm B(t))\right|_{\Omega_{\mathrm{em}}}
=
\sigma_{\mathrm{em}}\bm E|_{\Omega_{\mathrm{em}}}(t)+\chi i(t),\\
\sigma_{\mathrm d}^{1/2}
\left.
\left(
\sigma^{-1/2}\curl(\nu\bm B(t))
\right)
\right|_{\Omega_{\mathrm d}}
&=
\left.\curl(\nu\bm B(t))\right|_{\Omega_{\mathrm d}}
=
\bm J_{\mathrm d}(t)=
\sigma_{\mathrm d}
\left(
\bm E|_{\Omega_{\mathrm d}}(t)
+
I_{\mathrm d}^{-1}L(t)(\bm e_x\times\bm r)
\times
\bm B|_{\Omega_{\mathrm d}}(t)
\right).
\end{align*}
Testing Faraday's law with
$\bm\varphi\in C_0^1((0,T);\mathcal W)$, using integration by parts in space
and time and inserting the Silver--Müller boundary condition, gives the
space-time Faraday identity stated above. Equivalently, in pointwise-in-time
form, this is the finite-dissipation form of Faraday's law in
Definition~\ref{def:weak_solution}. The angular momentum balance in
\eqref{eq:model} gives the angular momentum balance in
Definition~\ref{def:weak_solution}.
\end{remark}

\subsection{Port-Hamiltonian modeling}
\label{subsec:pH_modeling}

We reformulate the coupled PDAE \eqref{eq:model} in the port-Hamiltonian framework to obtain an energy-based representation of the system. 
The central idea of this approach is that the power, defined as the temporal rate of change of the total energy, can be expressed in terms of dual variables called ports.
Recasting the governing equations in these variables exposes the system's energy-routing structure, making transparent how power is exchanged between the electromagnet and the mechanical brake, where it is supplied from external sources, and where it is dissipated. We emphasize that the analysis in this subsection is strictly on a formal level. The rigorous well-posedness analysis presented in the subsequent sections remains based on the weak formulation introduced above.

We first compute the time derivative of the total energy
$\mathcal E$:
\[
\dot{\mathcal{E}}(t)
=
\langle \bm B(t),\tfrac{\mathrm{d}}{\mathrm{d}t} \bm B(t)\rangle_{X_B}
+
\left(I_{\mathrm d}{}^{-1}L(t)\right) \dot L(t)\,.
\]
With respect to the $L^2(\Omega;\mathbb R^3)$-pairing and the Euclidean
pairing, respectively, the corresponding variational derivatives of
$\mathcal E$ are $\nu\bm B(t)$ and $I_{\mathrm d}^{-1}L(t)$.
Using Faraday's law \eqref{eq:dynB}, integration by parts, and the boundary
condition \eqref{eq:model_boundary}, we obtain
\[
\dot{\mathcal{E}}(t) = - \langle \gamma_T(\nu \bm B(t)), \gamma_T(\nu\bm B(t)) \rangle_{L^2_t(\partial\Omega,\zeta\,\mathrm dS)}- \langle \curl (\nu \bm B(t)), \bm E(t) \rangle_{L^2(\Omega_{\mathrm c};\mathbb{R}^3)}+
\left(I_{\mathrm d}{}^{-1}L(t)\right) \dot L(t)\,,
\]
where boundary condition \eqref{eq:model_boundary} has been inserted into the trace term. Next, we split the integral domain $\Omega_{\mathrm{c}}=\Omega_{\mathrm{em}} \cup \Omega_{\mathrm{d}}$
according to Hypothesis~\ref{hyp:geometry}. Substituting Ampère's law \eqref{eq:constraint_ampere}, the relation
\eqref{eq:model_abbreviations}, and the equation of motion
\eqref{eq:dynL}, the total power is given by
\begin{align*}
\dot{\mathcal{E}}(t) = &- \langle \gamma_T(\nu \bm B(t)), \gamma_T(\nu\bm B(t)) \rangle_{L^2_t(\partial\Omega,\zeta\,\mathrm dS)}- \langle \sigma_{\mathrm{em}} \bm E(t), \bm E(t) \rangle_{L^2(\Omega_{\mathrm{em}}; \mathbb{R}^3)}  \\
&- \langle \chi i(t), \bm E(t) \rangle_{L^2(\Omega_{\mathrm{em}}; \mathbb{R}^3)}- \langle \bm J_{\mathrm d}(t), \bm E(t) \rangle_{L^2(\Omega_{\mathrm{d}}; \mathbb{R}^3)}
+ \left(I_{\mathrm d}{}^{-1}L(t)\right) \mathcal T_{{\bm B}(t)}\bm J_{\mathrm d}(t) + \left(I_{\mathrm d}{}^{-1}L(t)\right)M_{\ext}(t)
\end{align*}
where, for a given magnetic field $\bm B$, we define the torque operator,
\[
\mathcal T_{\bm B(t)}: L^2(\Omega_{\mathrm{d}};\mathbb{R}^3) \to \mathbb{R} \,, \quad \bm F \mapsto \bm e_x^\top
\int_{\Omega_{\mathrm d}}
\bm r(\xi)\times
\bigl(\bm F(\xi)\times\bm B(t, \xi)\bigr)
\,\mathrm d\xi \,,
\]
so that
$\dot L(t)
=
\mathcal T_{\bm B(t)}\bm J_{\mathrm d}(t)
+
M_{\ext}(t)$.
The adjoint of the torque operator is determined by
\[
\left\langle
\mathcal T_{\bm B(t)}^*\omega,\bm J
\right\rangle_{L^2(\Omega_{\mathrm d};\mathbb R^3)}
=
\omega\,\mathcal T_{\bm B(t)}\bm J,
\qquad
\bm J\in L^2(\Omega_{\mathrm d};\mathbb R^3).
\]
Using the scalar triple-product identity, we obtain
\[
\mathcal T_{\bm B}^*\omega
=
-\omega(\bm e_x\times\bm r)\times\bm B.
\]
In particular, for
$\omega(t)=I_{\mathrm d}^{-1}L(t)$ and
$\bm v(t)=\omega(t)(\bm e_x\times\bm r)$,
\[
\mathcal T_{{\bm B}(t)}^*
\left(I_{\mathrm d}^{-1}L(t)\right)
=
-\bm v(t)\times\bm B(t).
\]
Consequently,
\[
\left(I_{\mathrm d}^{-1}L(t)\right)
\mathcal T_{{\bm B}(t)}\bm J_{\mathrm d}(t)
+
\left\langle
\bm v(t)\times\bm B(t),\bm J_{\mathrm d}(t)
\right\rangle_{L^2(\Omega_{\mathrm d};\mathbb R^3)}
=
0.
\]
Using
\[
\left.\bm E(t)\right|_{\Omega_{\mathrm d}}
=
\sigma_{\mathrm d}^{-1}\bm J_{\mathrm d}(t)
-
\bm v(t)\times\bm B(t),
\]
the electromagnetic and mechanical Lorentz power terms cancel, and we obtain
the formal power balance
\begin{equation}
\label{eq:formalPowerBalance}
\begin{split}
\dot{\mathcal E}(t)
&=
-
\left\|
\gamma_T\bigl(\nu\bm B(t)\bigr)
\right\|_{L^2_t(\partial\Omega,\zeta\,\mathrm dS)}^2
-
\left\langle
\sigma_{\mathrm{em}}\bm E(t),\bm E(t)
\right\rangle_{L^2(\Omega_{\mathrm{em}};\R^3)}
\\
&\quad
-
\left\langle
\bm J_{\mathrm d}(t),
\sigma_{\mathrm d}^{-1}\bm J_{\mathrm d}(t)
\right\rangle_{L^2(\Omega_{\mathrm d};\R^3)}
-
i(t)^\top
\int_{\Omega_{\mathrm{em}}}
\chi^\top\bm E(t)\,\mathrm d\xi
+
I_{\mathrm d}^{-1}L(t)M_{\ext}(t).
\end{split}
\end{equation}
The first three terms describe boundary and bulk dissipation, whereas the
last two terms are the electrical and mechanical external port powers.

To systematically separate power-conserving interconnection from losses, we reformulate the coupled PDAE \eqref{eq:model} in terms of a Dirac structure
and a resistive structure~\cite{vanderSchaft2014}. For this, first note that we can formally rewrite the coupled PDAE \eqref{eq:model} system as
\begin{equation}
\label{eq:SEDAE}
    \begin{pmatrix}
        - \tfrac{\mathrm{d}}{\mathrm{d}t} \bm B(t) \\
        - \dot L (t) \\
        f_{\mathrm{R},1}(t) \\
        f_{\mathrm{R},2}(t) \\
        f_\partial(t)  \\
        f_{\ext, 1}(t) \\
        f_{\ext, 2}(t) \\
        \nu \bm B(t) \\
        \omega (t) \\
        e_{\mathrm{R},1}(t) \\
        e_{\mathrm{R},2}(t) \\
        e_\partial(t)  \\
        e_{\ext, 1}(t) \\
        e_{\ext, 2}(t) 
    \end{pmatrix} = 
    \begin{pmatrix}
        0 & 0 &  \curl & 0 & 0 & 0  \\
        0 & 0 &  0 & - \mathcal T_{{\bm B}(t)}  & - 1 & 0 \\
        - \curl & 0 & 0 & \charFunction{\Omega_{\mathrm d}} & 0 & 1  \\
        0 & \mathcal{T}_{{\bm B}(t)}{}^{*}  & - \cdot \mid_{\Omega_{\mathrm{d}}} & 0 & 0 & 0 \\
        0 & 0 & \gamma_\tau & 0 & 0 & 0  \\
        0 & 0 & 0 & 0 & 1 & 0   \\
        0 & 0 & 0 & 0 & 0 & 1 \\
        1 & 0 & 0 & 0 & 0 & 0 \\
        0 & 1 & 0 & 0 & 0 & 0 \\
        0 & 0 & 1 & 0 & 0 & 0 \\
        0 & 0 & 0 & 1 & 0 & 0 \\
        \gamma_T & 0 & 0 & 0 & 0 & 0 \\
        0 & 1 & 0 & 0 & 0 & 0 \\
        0 & 0 & -1 & 0 & 0 & 0
    \end{pmatrix}
    \begin{pmatrix}
        \nu \bm B(t) \\
        \omega (t) \\
        \bm E(t) \\
        \bm J_{\mathrm{d}}(t) \\
        M_{\ext }(t) \\
        \chi i(t)
    \end{pmatrix}, \;\;
    \begin{pmatrix}
        f_{\mathrm{R},1}(t) \\
        f_{\mathrm{R},2}(t) \\
        f_\partial (t)
    \end{pmatrix} = -
    \begin{pmatrix}
\charFunction{\Omega_{\mathrm{em}}}\sigma_{\mathrm{em}} & 0 & 0\\
         0 & \sigma_{\mathrm{d}}{}^{-1} & 0 \\
         0 & 0 &  \zeta
    \end{pmatrix}
    \begin{pmatrix}
        e_{\mathrm{R},1}(t) \\
        e_{\mathrm{R},2}(t) \\
        e_\partial (t)
    \end{pmatrix},
\end{equation}
splitting off the resistive from the power-conserving part of the dynamics, and identifying explicitly the external port variables appearing in the power balance \eqref{eq:formalPowerBalance}. Here $\charFunction{\Omega_{\mathrm d}}\bm J_{\mathrm d}$ denotes the extension of the disk current $\bm J_{\mathrm{d}}$ by zero to $\Omega$.

This representation motivates us to define, analogous to the image  framework of \cite[Theorem~4.8]{BehrndtKurulaVanDerSchaftZwart2010} but extended to include external and resistive ports, the subspace
\begin{equation*}
\mathcal{D}_{\bm B} \coloneqq \im 
    \begin{pmatrix}
    \mathcal{J}_{\bm B} & - \beta^* \\
        \left[ \begin{smallmatrix} 0 & 0 & \gamma_\tau & 0 \end{smallmatrix} \right] & 0  \\
        0 & I_{2 \times 2} \\
        I_{4 \times 4} & 0 \\
        \left[ \begin{smallmatrix} \gamma_T & 0 & 0 & 0\ \end{smallmatrix} \right] & 0 \\
        \beta & 0
    \end{pmatrix}
\end{equation*}
given by the image of the matrix operator appearing in \eqref{eq:SEDAE} and where we used 
\begin{equation*}
\mathcal{J}_{\bm B} = \begin{bmatrix}
0 & 0 &  \curl & 0   \\
        0 & 0 &  0 & - \mathcal T_{{\bm B}}  \\
        - \curl & 0 & 0 & \charFunction{\Omega_{\mathrm d}} \\
        0 & \mathcal{T}_{{\bm B}}{}^{*}  & - \cdot \mid_{\Omega_{\mathrm{d}}} & 0
\end{bmatrix} \qquad \mathrm{and} \qquad \beta = \begin{bmatrix}
0 & 1 & 0 & 0 \\
0 & 0 & -1 & 0 
\end{bmatrix}.
\end{equation*}
Here the identity blocks act on the corresponding product spaces, and
$\beta^*$ denotes the adjoint with respect to the associated Hilbert-space
pairings.

On sufficiently regular fields, the image construction of
\cite[Theorem~4.8]{BehrndtKurulaVanDerSchaftZwart2010}, together with
integration by parts in the $\curl$ terms, formally yields a
power-conserving Dirac relation with respect to the corresponding
$L^2$-pairings for the distributed bulk ports, the standard Euclidean
pairing for the finite-dimensional ports, and the $L^2$-pairing on
$\partial\Omega$ for the boundary port $(f_\partial,e_\partial)$.
Note that $\mathcal D_{\bm B}$ depends on the magnetic field $\bm B$ through
the torque operator $\mathcal T_{\bm B}$ and its adjoint. It therefore formally defines a Dirac structure modulated by the magnetic
state, which is one way in which nonlinearities arise in port-Hamiltonian
systems.

Furthermore, the second relation in \eqref{eq:SEDAE} defines the resistive structure
\begin{equation*}
\mathcal{R} \coloneqq \Big\lbrace (f,e) \, \Big|\, f = -
    \left(\begin{smallmatrix}
\charFunction{\Omega_{\mathrm{em}}}\sigma_{\mathrm{em}} & 0 & 0\\
         0 & \sigma_{\mathrm{d}}{}^{-1} & 0 \\
         0 & 0 &  \zeta
    \end{smallmatrix}\right)
    e \Big\rbrace.
\end{equation*}

Combining the modulated Dirac structure $\mathcal D_{\bm B}$ with the
resistive relation $\mathcal R$ and the Hamiltonian $\mathcal E$ yields a
formal port-Hamiltonian representation of the coupled PDAE
\eqref{eq:model}. The original equations can therefore be expressed in the standard port-Hamiltonian form
\begin{equation*}
\begin{gathered}
\left(- \tfrac{\mathrm{d}}{\mathrm{d}t} \bm B(t),
        - \dot L (t),
        f_{\mathrm{R},1}(t),
        f_{\mathrm{R},2}(t),
        f_\partial(t)  ,
%        f_{\ext, 1}(t) ,
%        f_{\ext, 2}(t) ,
		M_{\ext }(t) ,
        \chi i(t) ,
        \frac{\delta \mathcal{E}}{\delta \bm B} ,
        \frac{\delta \mathcal{E}}{\delta L}    ,
        e_{\mathrm{R},1}(t) ,
        e_{\mathrm{R},2}(t) ,
        e_\partial(t)  ,
        e_{\ext, 1}(t) ,
        e_{\ext, 2}(t) 
    \right)^T \in \mathcal{D}_{\bm B(t)} \,, \\
  \left(f_{\mathrm{R},1}(t),
        f_{\mathrm{R},2}(t),
        f_\partial(t)  ,
        e_{\mathrm{R},1}(t) ,
        e_{\mathrm{R},2}(t) ,
        e_\partial(t)  
    \right)^T \in \mathcal{R}
\end{gathered}
\end{equation*}
thereby making the exchange, dissipation, and supply of energy explicit.

\section{The magneto--quasistatic subsystem}\label{sec:mqs}

This section constructs the linear magneto--quasistatic realization used in
the solvability analysis below. The construction is based on the
finite-dissipation space $V_B$ introduced in
Subsection~\ref{subsec:weak_solution}. It yields a nonnegative closed form,
the associated self-adjoint operator, and the corresponding inhomogeneous
evolution equation in the extrapolation space $V_B'$.

\subsection{The main operator and its properties}
\label{sec:contraction}

We now construct the operator governing the homogeneous magnetic-field
dynamics. Under the first-component identification introduced after
Lemma~\ref{lem:finite_dissipation_representatives_unique}, define
\begin{equation}
\label{eq:finite_dissipation_operator}
K:V_B\subset X_B
\to
L^2(\Omega_{\mathrm c};\R^3)
\times L^2_t(\partial\Omega,\zeta\,\mathrm dS),
\qquad
K\bm B:=(\bm q,\bm h)
\end{equation}
whenever $(\bm B,\bm q,\bm h)\in V_B$.
This is well defined and linear by
Lemma~\ref{lem:finite_dissipation_representatives_unique}. Moreover, $K$ is
closed, since its graph is precisely the closed subspace $V_B$ under the
identification above. It is densely defined because
$\mathcal D_{\mathrm{fd}}$ is dense in $X_B$. Finally, if
$K\bm B=(\bm q,\bm h)$, then the graph norm of $K$ is the $V_B$-norm,
\[
\|\bm B\|_{V_B}^2
=
\|\bm B\|_{X_B}^2
+
\|\bm q\|_{L^2(\Omega_{\mathrm c};\R^3)}^2
+
\|\bm h\|_{L^2_t(\partial\Omega,\zeta\,\mathrm dS)}^2.
\]
Define the symmetric nonnegative bilinear form
\begin{equation*}
\mathfrak a(\bm B,\bm C)
:=
\int_{\Omega_{\mathrm c}}
\bm q_{\bm B}^{\top}\bm q_{\bm C}\,\mathrm d\xi
+
\int_{\partial\Omega}
\zeta\,\bm h_{\bm B}^{\top}\bm h_{\bm C}\,\mathrm dS,
\qquad
\dom(\mathfrak a):=V_B,
\end{equation*}
where $K\bm B=(\bm q_{\bm B},\bm h_{\bm B})$ and
$K\bm C=(\bm q_{\bm C},\bm h_{\bm C})$. Since $K$ is closed, the form
$\mathfrak a$ is closed on $X_B$.
\begin{theorem}[Self-adjoint MQS realization]
\label{thm:selfadjoint_realization}
Let $G$ be the operator associated with the closed form $\mathfrak a$, that is,
\[
\dom(G)
:=
\left\{
\bm B\in\dom(K):
\exists\,\bm F\in X_B\ \text{such that }
\mathfrak a(\bm B,\bm C)
=
\langle \bm F,\bm C\rangle_{X_B}
\quad\forall\,\bm C\in\dom(K)
\right\},
\]
and $G\bm B:=\bm F$. Then $G=K^*K$.
In particular, $G$ is self-adjoint and nonnegative on $X_B$. Hence
$A:=-G=-K^*K$
is self-adjoint, dissipative, and maximally dissipative on $X_B$.
Consequently, $A$ generates a strongly continuous contraction semigroup
$(T(t))_{t\geq0}$ on $X_B$.
\end{theorem}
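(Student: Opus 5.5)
The plan is to reduce the statement to von Neumann's theorem on $K^*K$ for closed densely defined operators, and then read off the semigroup from Lumer--Phillips. The preceding discussion gives what is needed: $K$ is densely defined and closed from $X_B$ into the Hilbert space $Y:=L^2(\Omega_{\mathrm c};\R^3)\times L^2_t(\partial\Omega,\zeta\,\mathrm dS)$, with the product inner product. By construction,
\[
\mathfrak a(\bm B,\bm C)=\langle K\bm B,K\bm C\rangle_Y,\qquad \bm B,\bm C\in\dom(K)=V_B.
\]
So the task is to identify the form operator $G$ with $K^*K$.

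First inclusion, $K^*K\subset G$. Let $\bm B\in\dom(K^*K)$, so $\bm B\in\dom(K)$ and $K\bm B\in\dom(K^*)$. Then for all $\bm C\in\dom(K)$,
\[
\mathfrak a(\bm B,\bm C)=\langle K\bm B,K\bm C\rangle_Y=\langle K^*K\bm B,\bm C\rangle_{X_B}.
\]
Hence $\bm B\in\dom(G)$ with $G\bm B=K^*K\bm B$.

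Reverse inclusion, $G\subset K^*K$. Let $\bm B\in\dom(G)$ and $\bm F=G\bm B$. Then $\bm C\mapsto\langle K\bm B,K\bm C\rangle_Y=\langle\bm F,\bm C\rangle_{X_B}$ is bounded in the $X_B$-norm on $\dom(K)$. By the definition of the adjoint, this means $K\bm B\in\dom(K^*)$ and $K^*K\bm B=\bm F$. Thus $G=K^*K$.

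The key step is self-adjointness, which I take from von Neumann's theorem: for a closed densely defined $K$, the operator $K^*K$ is self-adjoint and nonnegative, and $\id+K^*K$ is bijective onto $X_B$. I would either cite this (e.g.\ Kato, or Reed--Simon) or give the short argument. For the argument, fix $\bm F\in X_B$. The graph space $V_B$ with inner product $\langle\cdot,\cdot\rangle_{X_B}+\mathfrak a$ is Hilbert exactly because $K$ is closed. Riesz representation of $\bm C\mapsto\langle\bm F,\bm C\rangle_{X_B}$ on this space gives $\bm B\in V_B$ with $\langle\bm B,\bm C\rangle_{X_B}+\mathfrak a(\bm B,\bm C)=\langle\bm F,\bm C\rangle_{X_B}$ for all $\bm C\in V_B$. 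This says $\bm B\in\dom(G)$ and $(\id+G)\bm B=\bm F$, so $\id+G$ is surjective. Symmetry of $G$ follows from symmetry of $\mathfrak a$, and density of $\dom(G)$ follows from injectivity of $(\id+G)^{-1}$ composed with the symmetric structure. A symmetric operator with $\id+G$ surjective is self-adjoint. Nonnegativity is $\langle G\bm B,\bm B\rangle_{X_B}=\|K\bm B\|_Y^2\ge0$.

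For $A=-G$, self-adjointness is immediate, and dissipativity follows from $\langle A\bm B,\bm B\rangle_{X_B}=-\|K\bm B\|_Y^2\le0$. Maximality is the surjectivity of $\id-A=\id+G$ shown above. The Lumer--Phillips theorem, with density of $\dom(A)$ (which self-adjointness already contains), then yields a strongly continuous contraction semigroup on $X_B$.

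The only delicate point I expect is bookkeeping. The identification of $V_B$ with its first component relies on Lemma~\ref{lem:finite_dissipation_representatives_unique}. The adjoint must be taken with respect to the weighted $X_B$ inner product and the weighted boundary space, not the plain $L^2$ ones. Once these identifications are fixed, the argument is the standard one above.
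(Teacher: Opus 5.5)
Your proposal is correct and follows the same route as the paper. In both, the form operator is identified with $K^*K$ via $\mathfrak a(\bm B,\bm C)=\langle K\bm B,K\bm C\rangle$, self-adjointness, nonnegativity and surjectivity of $\id+G$ are established, and Lumer--Phillips is applied to $A=-G$. The only difference is that you prove by hand what the paper cites from Kato (the first representation theorem and Example~VI.2.13): the two inclusions via the definition of the adjoint, and Riesz representation on the graph space $V_B$.
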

\begin{proof}
Since $K$ is closed and densely defined, the form $\mathfrak a$ is closed,
symmetric, and nonnegative. By the first representation theorem for closed
nonnegative symmetric forms, the associated operator $G$ is self-adjoint and
nonnegative; see \cite[Ch.~VI, Sec.~2, Thm.~2.1]{Kato1995}. Moreover, for
forms of the type
$\mathfrak a(\bm B,\bm C)=\langle K\bm B,K\bm C\rangle$ the associated
operator is $K^*K$; see \cite[Ch.~VI, Sec.~2.4, Example~2.13]{Kato1995}.
Hence $G=K^*K$.

It follows that $A=-G$ is self-adjoint and dissipative. Moreover, for every
$\lambda>0$, the operator $\lambda\id+G$ is onto $X_B$. Equivalently,
$\lambda\id-A$ is onto $X_B$. Hence $A$ is maximally dissipative. The
Lumer--Phillips theorem yields that $A$ generates a strongly continuous
contraction semigroup on $X_B$; see
\cite[Thm.~II.3.15]{EngelNagel2000}.
\end{proof}
We now pass to the form realization in the Hilbert triple
$V_B\hookrightarrow X_B\hookrightarrow V_B'$,
where $X_B$ is used as pivot space. The form $\mathfrak a$ induces the bounded operator
\begin{equation*}
\mathcal A:V_B\to V_B',
\qquad
\langle \mathcal A\bm B,\bm C\rangle_{V_B',V_B}
=
\mathfrak a(\bm B,\bm C).
\end{equation*}
The self-adjoint operator $G=K^*K$ is the part of $\mathcal A$ in $X_B$,
and $A=-G$ is the corresponding semigroup generator.

We use the associated Hilbert scale
\begin{equation}
\label{eq:hilbert_scale}
X_{1/2}:=V_B,
\qquad
X_{-1/2}:=V_B'.
\end{equation}
Since $A=-G$ is self-adjoint and nonpositive, the semigroup $T(t)$
leaves $V_B=\dom((\id +G)^{1/2})$ invariant and is strongly continuous on
$V_B$. By duality with respect to the pivot space $X_B$, it therefore
induces a strongly continuous semigroup
$(T_{-1/2}(t))_{t\geq0}$ on $X_{-1/2}=V_B'$. This is the standard
interpolation/extrapolation setting for analytic semigroups; cf.
\cite[Part~II, Ch.~3, Sec.~2.1, Thm.~2.2]{BensoussanDaPratoDelfourMitter2007}.
Its generator is the extrapolated operator $A_{-1/2}$, and its restriction to
$X_{1/2}=V_B$ is given by
$A_{-1/2}\bm B=-\mathcal A\bm B$, $\bm B\in V_B$.
\begin{theorem}[Inhomogeneous MQS equation]
\label{thm:inhomogeneous_mqs_equation}
Let $T>0$, $\bm B_0\in X_B$, and
$f\in L^2([0,T];V_B')$. Then there exists a unique
\[
\bm B\in L^2([0,T];V_B)\cap H^1([0,T];V_B')\cap C([0,T];X_B)
\]
satisfying
\[
\dot{\bm B}(t)+\mathcal A\bm B(t)=f(t)
\quad\text{in }V_B'\text{ for a.e. }t\in(0,T),
\qquad
\bm B(0)=\bm B_0.
\]
Moreover, $\bm B$ is given by the variation-of-constants formula
\[
\bm B(t)
=
T(t)\bm B_0
+
\int_0^t
T_{-1/2}(t-\tau)f(\tau)\,\mathrm d\tau,
\qquad t\in[0,T],
\]
where the integral and the equality are understood in $X_{-1/2}=V_B'$.
By the regularity asserted above, the right-hand side belongs to $X_B$ for
every $t\in[0,T]$. The solution satisfies
the estimate
\[
\|\bm B\|_{C([0,T];X_B)}
+
\|\bm B\|_{L^2([0,T];V_B)}
+
\|\dot{\bm B}\|_{L^2([0,T];V_B')}
\leq
C_T
\left(
\|\bm B_0\|_{X_B}
+
\|f\|_{L^2([0,T];V_B')}
\right).
\]
In particular, the solution depends continuously on the initial value and on
the inhomogeneity: if $\bm B_1,\bm B_2$ correspond to
$(\bm B_{0,1},f_1)$ and $(\bm B_{0,2},f_2)$, respectively, then
\[
\|\bm B_1-\bm B_2\|_{C([0,T];X_B)}
+
\|\bm B_1-\bm B_2\|_{L^2([0,T];V_B)}
\leq
C_T
\left(
\|\bm B_{0,1}-\bm B_{0,2}\|_{X_B}
+
\|f_1-f_2\|_{L^2([0,T];V_B')}
\right).
\]
If $\bm B_0=0$, then, for every $T_0>0$, there is a constant $C_{T_0}>0$
such that for all $0<\delta\leq T_0$ and all
$f\in L^\infty([0,\delta];V_B')$,
\[
\|\bm B\|_{C([0,\delta];X_B)}
\leq
C_{T_0}\sqrt{\delta}\,
\|f\|_{L^\infty([0,\delta];V_B')}.
\]
\end{theorem}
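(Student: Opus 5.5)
The plan is to invoke the J.-L.~Lions theorem for abstract parabolic equations in the Hilbert triple $V_B\hookrightarrow X_B\hookrightarrow V_B'$. The form $\mathfrak a$ is bounded on $V_B$, since $|\mathfrak a(\bm B,\bm C)|\le\|\bm B\|_{V_B}\|\bm C\|_{V_B}$. It also satisfies the G\aa rding inequality
\[
\mathfrak a(\bm B,\bm B)+\|\bm B\|_{X_B}^2=\|\bm B\|_{V_B}^2 .
\]
Hence \cite[Ch.~XVIII, Sec.~3, Thm.~1--2]{DautrayLionsVol5} yields a unique $\bm B\in L^2([0,T];V_B)\cap H^1([0,T];V_B')$ with $\dot{\bm B}+\mathcal A\bm B=f$ and $\bm B(0)=\bm B_0$. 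The embedding of this space into $C([0,T];X_B)$ is the same Hilbert-triple result already used in Proposition~\ref{prop:energy_balance_weak_solution}.

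For the estimate, I would test the equation with $\bm B(t)$ and apply the chain rule. This gives
\[
\tfrac12\tfrac{\mathrm d}{\mathrm dt}\|\bm B\|_{X_B}^2+\mathfrak a(\bm B,\bm B)=\langle f,\bm B\rangle_{V_B',V_B}\le \tfrac12\|f\|_{V_B'}^2+\tfrac12\|\bm B\|_{V_B}^2 .
\]
Using $\mathfrak a(\bm B,\bm B)=\|\bm B\|_{V_B}^2-\|\bm B\|_{X_B}^2$, integrating in time, and applying Gronwall bounds $\|\bm B\|_{C([0,T];X_B)}$ and $\|\bm B\|_{L^2([0,T];V_B)}$ by $C_T(\|\bm B_0\|_{X_B}+\|f\|_{L^2([0,T];V_B')})$. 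The bound on $\dot{\bm B}=f-\mathcal A\bm B$ in $L^2([0,T];V_B')$ then follows from $\|\mathcal A\|_{\mathcal L(V_B,V_B')}\le1$. Continuous dependence is immediate from linearity, by applying the estimate to $\bm B_1-\bm B_2$.

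To obtain the variation-of-constants formula, I would first treat smooth data. For $\bm B_0\in V_B$ and $f\in C^1([0,T];V_B')$, the mild solution in $X_{-1/2}$ generated by $A_{-1/2}$ is a classical solution there. Since $A_{-1/2}|_{V_B}=-\mathcal A$, it coincides with the Lions solution by uniqueness. Both sides of the formula depend continuously on $(\bm B_0,f)$ in $X_B\times L^2([0,T];V_B')$, with values in $C([0,T];V_B')$, so density gives the formula in general. Identifying the extrapolated generator with $-\mathcal A$ on $V_B$ is taken from the setup preceding the theorem.

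The $\sqrt\delta$ bound is where I expect the main difficulty, because the energy estimate must be organised so that its constant does not grow as $\delta\to0$. With $\bm B_0=0$ I would use the energy inequality on $[0,t]$, $t\le\delta$, keeping the $V_B$-norm with a weight. For $\varepsilon\in(0,1]$,
\[
\tfrac12\|\bm B(t)\|_{X_B}^2+\|\bm B\|_{L^2(0,t;V_B)}^2\le \tfrac1{2\varepsilon}\|f\|_{L^2(0,t;V_B')}^2+\tfrac\varepsilon2\|\bm B\|_{L^2(0,t;V_B)}^2+\int_0^t\|\bm B\|_{X_B}^2 .
\]
Absorbing the $\varepsilon$-term and applying Gronwall on $[0,T_0]$ gives $\|\bm B\|_{C([0,\delta];X_B)}^2\le C e^{2T_0}\|f\|_{L^2(0,\delta;V_B')}^2$. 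Finally, $\|f\|_{L^2(0,\delta;V_B')}\le\sqrt\delta\,\|f\|_{L^\infty(0,\delta;V_B')}$, which gives the claimed bound with $C_{T_0}$ independent of $\delta$.
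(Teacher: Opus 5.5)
Your proposal is correct and follows essentially the same route as the paper. Both use the Lions theorem in the triple $V_B\hookrightarrow X_B\hookrightarrow V_B'$ for existence and uniqueness, testing with $\bm B(t)$ plus Young and Gronwall for the estimate, $\dot{\bm B}=f-\mathcal A\bm B$ for the derivative bound, linearity for continuous dependence, and $\|f\|_{L^2}\le\sqrt\delta\,\|f\|_{L^\infty}$ for the last bound. Your smooth-data-plus-density argument for the variation-of-constants formula and your explicit check that the Gronwall constant is uniform for $\delta\le T_0$ only spell out what the paper cites or uses implicitly; the $\varepsilon$-weighting is harmless but unnecessary, since $\varepsilon=1$ already works.
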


\begin{proof}
The existence and uniqueness statement follows from the standard theorem for
variational evolution equations in a Hilbert triple, applied to
$V_B\hookrightarrow X_B\hookrightarrow V_B'$
and to the autonomous form associated with $\mathcal A$; see
\cite[Part~II, Ch.~2, Sec.~1.3, Thm.~1.1]{BensoussanDaPratoDelfourMitter2007}.
This gives
\[
\bm B\in L^2([0,T];V_B)\cap H^1([0,T];V_B')
\cap C([0,T];X_B).
\]
The variation-of-constants formula is the corresponding mild representation
in the extrapolation space $V_B'$; cf. the interpolation/extrapolation
framework for analytic semigroups in
\cite[Part~II, Ch.~3, Thm.~2.2]{BensoussanDaPratoDelfourMitter2007}. Testing
$\dot{\bm B}(t)+\mathcal A\bm B(t)=f(t)$
with $\bm B(t)$, using the Hilbert-triple chain rule
\cite[Ch.~XVIII, Sec.~1, Thm.~2]{DautrayLionsVol5}, and applying Young's inequality yields
\[
\tfrac12\tfrac{\mathrm d}{\mathrm dt}\|\bm B(t)\|_{X_B}^2
+
\mathfrak a(\bm B(t),\bm B(t))
\leq
\langle f(t),\bm B(t)\rangle_{V_B',V_B}
\leq
\tfrac12\|\bm B(t)\|_{V_B}^2
+
\tfrac12\|f(t)\|_{V_B'}^2.
\]
Since
\[
\|\bm B(t)\|_{V_B}^2
=
\|\bm B(t)\|_{X_B}^2+\mathfrak a(\bm B(t),\bm B(t)),
\]
we obtain
\[
\tfrac12\frac{\mathrm d}{\mathrm dt}\|\bm B(t)\|_{X_B}^2
+
\tfrac12\mathfrak a(\bm B(t),\bm B(t))
\leq
\tfrac12\|\bm B(t)\|_{X_B}^2
+
\tfrac12\|f(t)\|_{V_B'}^2.
\]
Gronwall's lemma gives the asserted bounds for
$\|\bm B\|_{C([0,T];X_B)}$ and $\|\bm B\|_{L^2([0,T];V_B)}$.
The bound for $\dot{\bm B}$ follows from
$\dot{\bm B}=f-\mathcal A\bm B$
and from the boundedness of $\mathcal A:V_B\to V_B'$.

The estimate for the difference
of two solutions follows by applying the same argument to the difference
equation. The last estimate follows from the preceding estimate and
\[
\|f\|_{L^2([0,\delta];V_B')}
\leq
\sqrt{\delta}\,
\|f\|_{L^\infty([0,\delta];V_B')}.\qedhere\]
\end{proof}

The following elementary observation connects the abstract form equation with
the test functions used in Definition~\ref{def:weak_solution}. If
$\bm\varphi\in\mathcal W$, then $\nu^{-1}\bm\varphi\in\mathcal D_{\mathrm{fd}}$ and
\[
K(\nu^{-1}\bm\varphi)
=
\left(
\sigma^{-1/2}
\left.\curl\bm\varphi\right|_{\Omega_{\mathrm c}},
\gamma_T\bm\varphi
\right).
\]
Indeed, $\nu(\nu^{-1}\bm\varphi)=\bm\varphi$. Since
$\bm\varphi\in H^1(\Omega;\R^3)$, the standard trace theorem gives
$\gamma\bm\varphi\in H^{1/2}(\partial\Omega;\R^3)\subset
L^2(\partial\Omega;\R^3)$; see \cite[Thm.~3.37]{McLean2000}. Since
$\bm n_o\in L^\infty(\partial\Omega;\R^3)$, this implies
$\gamma_T\bm\varphi\in L^2_t(\partial\Omega)$. Together with
$\curl\bm\varphi=0$ in $\Omega_0$, this proves
$\nu^{-1}\bm\varphi\in\mathcal D_{\mathrm{fd}}$.
Moreover, the mapping
$\mathcal W\to V_B$,
$\bm\varphi\mapsto \nu^{-1}\bm\varphi$,
is bounded.

\begin{lemma}[Weak test identity for the inhomogeneous MQS equation]\label{lem:mqs_test_function_identity}
Let $T>0$, $\bm B_0\in X_B$, and
$f\in L^2([0,T];V_B')$. Let $\bm B$ be the solution from
Theorem~\ref{thm:inhomogeneous_mqs_equation}. Then, for every
$\bm\Phi\in C^1([0,T];V_B)$ with $\bm\Phi(T)=0$,
\[
\int_0^T
\langle \bm B(t),\dot{\bm\Phi}(t)\rangle_{X_B}
\,\mathrm dt
+
\langle \bm B_0,\bm\Phi(0)\rangle_{X_B}
=
\int_0^T
\mathfrak a(\bm B(t),\bm\Phi(t))
\,\mathrm dt
-
\int_0^T
\langle f(t),\bm\Phi(t)\rangle_{V_B',V_B}
\,\mathrm dt.
\]
In particular, for every
$\bm\varphi\in C_0^1((0,T);\mathcal W)$,
\[
\int_0^T
\mathfrak a\bigl(\bm B(t),\nu^{-1}\bm\varphi(t)\bigr)
\,\mathrm dt
=
\int_0^T\int_\Omega
\bm B(\xi,t)^\top\tfrac{\mathrm{d}}{\mathrm{d}t}\bm\varphi(\xi,t)
\,\mathrm d\xi\,\mathrm dt
+\int_0^T
\left\langle
f(t),\nu^{-1}\bm\varphi(t)
\right\rangle_{V_B',V_B}
\,\mathrm dt
.
\]
\end{lemma}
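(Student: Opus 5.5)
The plan is to test the equation $\dot{\bm B}(t)+\mathcal A\bm B(t)=f(t)$ from Theorem~\ref{thm:inhomogeneous_mqs_equation} against $\bm\Phi(t)$ and integrate by parts in time with the Hilbert-triple product rule. Fix $\bm\Phi\in C^1([0,T];V_B)$ with $\bm\Phi(T)=0$. Since $\bm B\in L^2([0,T];V_B)\cap H^1([0,T];V_B')$ and $\bm\Phi\in H^1([0,T];V_B)\subset H^1([0,T];V_B')\cap L^2([0,T];V_B)$, the scalar function
\[
g(t):=\langle \bm B(t),\bm\Phi(t)\rangle_{X_B}
\]
is absolutely continuous. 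Its derivative is
\[
\dot g(t)=\langle\dot{\bm B}(t),\bm\Phi(t)\rangle_{V_B',V_B}+\langle \bm B(t),\dot{\bm\Phi}(t)\rangle_{X_B}
\quad\text{for a.e. } t .
\]
This is the polarized version of the chain rule \cite[Ch.~XVIII, Sec.~1, Thm.~2]{DautrayLionsVol5}; alternatively, approximate $\bm B$ by smooth $V_B$-valued functions in $L^2(V_B)\cap H^1(V_B')$ and pass to the limit. In the last pairing I use that $\dot{\bm\Phi}(t)\in V_B\subset X_B$, so the $V_B'$--$V_B$ pairing reduces to the $X_B$ inner product via the canonical embedding.

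Next, integrate from $0$ to $T$. Because $\bm\Phi(T)=0$ and $\bm B(0)=\bm B_0$ (continuity in $X_B$), this gives
\[
-\langle \bm B_0,\bm\Phi(0)\rangle_{X_B}=\int_0^T\langle\dot{\bm B},\bm\Phi\rangle_{V_B',V_B}\,\mathrm dt+\int_0^T\langle\bm B,\dot{\bm\Phi}\rangle_{X_B}\,\mathrm dt .
\]
Substitute $\dot{\bm B}=f-\mathcal A\bm B$ in $V_B'$ and use $\langle\mathcal A\bm B(t),\bm\Phi(t)\rangle_{V_B',V_B}=\mathfrak a(\bm B(t),\bm\Phi(t))$. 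Rearranging yields the first identity. Integrability of every term follows from $\bm B\in L^2(V_B)$, the boundedness of $\mathfrak a$ on $V_B$, and $f\in L^2(V_B')$.

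For the second identity, take $\bm\varphi\in C_0^1((0,T);\mathcal W)$ and set $\bm\Phi:=\nu^{-1}\bm\varphi$. The observation preceding the lemma shows that $\bm\varphi\mapsto\nu^{-1}\bm\varphi$ is bounded linear from $\mathcal W$ to $V_B$. Hence $\bm\Phi\in C^1([0,T];V_B)$ with $\dot{\bm\Phi}=\nu^{-1}\dot{\bm\varphi}$ and $\bm\Phi(0)=\bm\Phi(T)=0$, so the boundary term drops out. Moreover,
\[
\langle\bm B,\nu^{-1}\dot{\bm\varphi}\rangle_{X_B}=\int_\Omega(\nu\bm B)^\top\nu^{-1}\dot{\bm\varphi}\,\mathrm d\xi=\int_\Omega\bm B^\top\dot{\bm\varphi}\,\mathrm d\xi
\]
by the symmetry of $\nu$. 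Rearranging the first identity then gives the second.

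The only delicate point is the product rule for $g$, where $\bm\Phi$ is $V_B$-valued but $\bm B$ has only $H^1(V_B')$ regularity. I would justify it by the standard density argument: smooth $\bm B$ in time, apply the classical product rule, and pass to the limit using $C([0,T];X_B)$ convergence of the approximations. Everything else is bookkeeping.
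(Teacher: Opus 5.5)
Your proposal is correct and follows the paper's proof: you test $\dot{\bm B}+\mathcal A\bm B=f$ with $\bm\Phi(t)$, integrate by parts in time via the Hilbert-triple formula, then specialize to $\bm\Phi=\nu^{-1}\bm\varphi$ and use the symmetry of $\nu$ to turn the $X_B$ pairing into $\int_\Omega\bm B^\top\tfrac{\mathrm d}{\mathrm dt}\bm\varphi\,\mathrm d\xi$. As a minor remark, since $\bm\Phi\in C^1([0,T];V_B)$, the product rule for $\langle\bm B(t),\bm\Phi(t)\rangle_{V_B',V_B}$ already follows from $\bm B\in H^1([0,T];V_B')$ alone. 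The step you flag as delicate therefore does not need the density argument.
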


\begin{proof}
For almost every $t\in(0,T)$ and every $\bm C\in V_B$, the evolution
equation is equivalent to
\[
\langle \dot{\bm B}(t),\bm C\rangle_{V_B',V_B}
+
\mathfrak a(\bm B(t),\bm C)
=
\langle f(t),\bm C\rangle_{V_B',V_B}.
\]
Testing this identity with $\bm C=\bm\Phi(t)$, integrating in time, and
using the Hilbert-triple integration-by-parts formula
\cite[Ch.~XVIII, Sec.~1, Thm.~2]{DautrayLionsVol5}
gives the first identity.
For the second identity, choose
$\bm\Phi(t)=\nu^{-1}\bm\varphi(t)$. Since
$\bm\varphi\in C_0^1((0,T);\mathcal W)$, we have
$\bm\Phi\in C^1([0,T];V_B)$ and $\bm\Phi(0)=\bm\Phi(T)=0$. Moreover,
by the symmetry of $\nu$,
\[
\langle \bm B(t),\nu^{-1}\tfrac{\mathrm{d}}{\mathrm{d}t}\bm\varphi(t)\rangle_{X_B}
=
\int_\Omega
\bm B(\xi,t)^\top\tfrac{\mathrm{d}}{\mathrm{d}t}\bm\varphi(\xi,t)
\,\mathrm d\xi.
\]
This proves the asserted identity.
\end{proof}

For later use, we also record the impressed-current operator. For
$\iota\in\R^m$, define $\mathcal B_{\mathrm{em}}\iota\in V_B'$ by
\begin{equation}
\label{eq:em_input_operator}
\left\langle \mathcal B_{\mathrm{em}}\iota,\bm C\right\rangle_{V_B',V_B}
:=
\int_{\Omega_{\mathrm{em}}}
(\sigma^{-1/2}\chi \iota)^\top (K\bm C)_1
\,\mathrm d\xi,
\qquad
\bm C\in V_B,
\end{equation}
where $(K\bm C)_1$ denotes the first component of $K\bm C$ in
$L^2(\Omega_{\mathrm c};\R^3)$. Then
$\mathcal B_{\mathrm{em}}\in\mathcal L(\R^m,V_B')$. Indeed,
\[
\left|
\left\langle \mathcal B_{\mathrm{em}}\iota,\bm C\right\rangle_{V_B',V_B}
\right|
\leq
\|\sigma^{-1/2}\chi \iota\|_{L^2(\Omega_{\mathrm{em}})}
\|(K\bm C)_1\|_{L^2(\Omega_{\mathrm c})}
\leq
c\|\iota\|_2\|\bm C\|_{V_B}.
\]
Moreover, for $\bm\varphi\in\mathcal W$,
\[
\left\langle
\mathcal B_{\mathrm{em}}\iota,\nu^{-1}\bm\varphi
\right\rangle_{V_B',V_B}
=
\int_{\Omega_{\mathrm{em}}}
(\sigma^{-1}\chi \iota)^\top
\left.\curl\bm\varphi\right|_{\Omega_{\mathrm{em}}}
\,\mathrm d\xi.
\]

\section{Solvability of the coupled eddy current brake}
\label{sec:solvability_analysis}

The aim of this section is to prove existence and uniqueness of weak solutions
of the coupled eddy current brake. The proof proceeds in three steps. First,
we rewrite the coupled weak formulation as an abstract integral system for the
pair $(\bm B,L)$. Second, we solve this integral system locally by a
fixed-point argument. Third, we use the energy balance to exclude finite-time
blow-up and hence obtain global solutions.

Throughout this section, the standing hypotheses from
Subsection~\ref{subsec:standing_assumptions} are in force. We use the
magneto-quasistatic realization from Section~\ref{sec:mqs}. In particular,
$A=-K^*K$ generates the contraction semigroup $(T(t))_{t\geq0}$ on $X_B$,
and the inhomogeneous magnetic equation is understood in the form space
$V_B'$.

We isolate the part of the finite-dissipation representative which enters the
disk equation. Set
\begin{equation*}
Y_{\mathrm d}:=L^2(\Omega_{\mathrm d};\R^3).
\end{equation*}
The disk current map is
\begin{equation}
\label{eq:disk_current_observation}
\mathcal C_{\mathrm d}:V_B\to Y_{\mathrm d},
\qquad
\mathcal C_{\mathrm d}\bm B
:=
\left.\sigma^{1/2}(K\bm B)_1\right|_{\Omega_{\mathrm d}}.
\end{equation}
Thus $\mathcal C_{\mathrm d}\bm B$ is the disk current density associated
with the magnetic state $\bm B$. In particular, for
$\bm B\in\mathcal D_{\mathrm{fd}}$,
we have $\mathcal C_{\mathrm d}\bm B
=
\curl(\nu\bm B)\vert_{\Omega_{\mathrm d}}$.
The map $\mathcal C_{\mathrm d}$ is bounded, since $K$ is bounded from
$V_B$ into
$L^2(\Omega_{\mathrm c};\R^3)
\times L^2_t(\partial\Omega,\zeta\,\mathrm dS)$ with respect to the
$V_B$-norm and
$\sigma^{1/2}\in L^\infty(\Omega_{\mathrm c};\R^{3\times3})$.

The corresponding input map from disk current densities into the magnetic
equation is defined by duality,
\begin{equation}
\label{eq:disk_input_operator}
\left\langle \mathcal B_{\mathrm d}\bm u,\bm C
\right\rangle_{V_B',V_B}
:=
\int_{\Omega_{\mathrm d}}
\bm u^\top
\mathcal C_{\mathrm d}\bm C
\,\mathrm d\xi,
\qquad
\bm C\in V_B.
\end{equation}
This defines a bounded operator
$\mathcal B_{\mathrm d}\in\mathcal L(Y_{\mathrm d},V_B')$.

For the reconstruction of the electric field in the electromagnet we also use
\begin{equation*}
\mathcal C_{\mathrm{em}}:V_B\to L^2(\Omega_{\mathrm{em}};\R^3),
\qquad
\mathcal C_{\mathrm{em}}\bm B
:=
\sigma^{1/2}(K\bm B)_1\big\vert_{\Omega_{\mathrm{em}}}.
\end{equation*}
For $L\in\R$, define the disk velocity field
\begin{equation}
\label{eq:disk_velocity_field}
\bm v_L(\xi)
:=
I_{\mathrm d}^{-1}L(\bm e_x\times\bm r(\xi)),
\qquad
\xi\in\Omega_{\mathrm d}.
\end{equation}
Since $\bm e_x\times\bm r\in L^\infty(\Omega_{\mathrm d};\R^3)$, there is a
constant $c_v>0$ such that
\[
\|\bm v_L\times\bm B\|_{Y_{\mathrm d}}
\leq
c_v |L|\,\|\bm B\|_{X_B}
\quad
\text{for all }\bm B\in X_B,\ L\in\R.
\]
For $\bm B\in X_B$ and $\bm J\in Y_{\mathrm d}$, set
\begin{equation*}
\mathcal T(\bm B,\bm J)
:=
\bm e_x^\top
\int_{\Omega_{\mathrm d}}
\bm r(\xi)\times(\bm J(\xi)\times\bm B(\xi))
\,\mathrm d\xi.
\end{equation*}
Then $\mathcal T$ is a continuous bilinear map. More precisely, there exists
a constant $c_{\mathcal T}>0$ such that
\[
|\mathcal T(\bm B,\bm J)|
\leq
c_{\mathcal T}\|\bm B\|_{X_B}\|\bm J\|_{Y_{\mathrm d}}
\qquad
\text{for all }(\bm B,\bm J)\in X_B\times Y_{\mathrm d}.
\]
For given functions $\bm B$ and $L$, we write
\begin{equation*}
\bm u_{\bm B,L}(t)
:=
\bm v_{L(t)}\times\left.\bm B(t)\right|_{\Omega_{\mathrm d}}.
\end{equation*}
On a time interval $[0,\delta]$, the integrated variation-of-constants system is
\begin{subequations}
\label{eq:coupled_voc_fixed_point}
\begin{align}
\bm B(t)
&=
T(t)\bm B_0
+
\int_0^t
T_{-1/2}(t-\tau)
\left[
\mathcal B_{\mathrm{em}}i(\tau)
+
\mathcal B_{\mathrm d}\bm u_{\bm B,L}(\tau)
\right]
\,\mathrm d\tau,
\label{eq:coupled_voc_fixed_point_B}
\\
L(t)
&=
L_0
+
\int_0^t
\mathcal T\bigl(\bm B(\tau),\mathcal C_{\mathrm d}\bm B(\tau)\bigr)
\,\mathrm d\tau
+
\int_0^t
M_{\ext}(\tau)\,\mathrm d\tau,
\qquad t\in[0,\delta].\label{eq:coupled_voc_fixed_point_L}
\end{align}
\end{subequations}
The first identity is understood in $X_{-1/2}$ for every $t\in[0,\delta]$; for
solutions in the regularity class considered below, both sides belong to
$C([0,\delta];X_B)$.

Given a pair $(\bm B,L)$ satisfying
\[
\bm B\in L^2([0,\delta];V_B)\cap C([0,\delta];X_B),
\qquad
L\in C([0,\delta]),
\]
we reconstruct, for almost every $t\in(0,\delta)$, the variables of
Definition~\ref{def:weak_solution} by
\begin{equation}
\label{eq:weak_solution_reconstruction}
\begin{aligned}
\mathfrak B(t)&:=
\bigl(\bm B(t),(K\bm B(t))_1,(K\bm B(t))_2\bigr),\!\!\!\!\!
&
\bm J_{\mathrm d}(t)&:=\mathcal C_{\mathrm d}\bm B(t),
\\
\left.\bm E(t)\right|_{\Omega_{\mathrm{em}}}
&:=
\sigma_{\mathrm{em}}^{-1}
\left(
\mathcal C_{\mathrm{em}}\bm B(t)-\chi i(t)
\right),
&
\left.\bm E(t)\right|_{\Omega_{\mathrm d}}
&:=
\sigma_{\mathrm d}^{-1}\mathcal C_{\mathrm d}\bm B(t)
-
\bm v_{L(t)}\times\left.\bm B(t)\right|_{\Omega_{\mathrm d}}.
\end{aligned}
\end{equation}
\begin{proposition}[Weak solutions and the variation-of-constants system]
\label{prop:weak_solution_voc_equivalence}
Assume Hypotheses~\ref{hyp:geometry}--\ref{hyp:disk}. Let
$0<\delta<\infty$ and let $(\bm B_0,L_0)\in X_B\times\R$.
Let $(\bm B,L)$ satisfy
\[
\bm B\in L^2([0,\delta];V_B)\cap C([0,\delta];X_B),
\qquad
L\in C([0,\delta]),
\]
and let $(\mathfrak B,L,\bm E,\bm J_{\mathrm d})$ be reconstructed from
$(\bm B,L)$ by \eqref{eq:weak_solution_reconstruction}. Then the following assertions are
equivalent.

\begin{enumerate}[(i)]
\item\label{item:weak_solution_voc_weak}
The reconstructed quadruple
$(\mathfrak B,L,\bm E,\bm J_{\mathrm d})$
is a weak solution on $[0,\delta]$ in the sense of
Definition~\ref{def:weak_solution}.

\item\label{item:weak_solution_voc_integral}
The pair $(\bm B,L)$ satisfies the integrated variation-of-constants system
\eqref{eq:coupled_voc_fixed_point}.
\end{enumerate}
\end{proposition}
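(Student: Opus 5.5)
The plan is to reduce both directions to Theorem~\ref{thm:inhomogeneous_mqs_equation}, applied with the inhomogeneity
\[
f(t):=\mathcal B_{\mathrm{em}}i(t)+\mathcal B_{\mathrm d}\bm u_{\bm B,L}(t).
\]
We first check that $f\in L^2([0,\delta];V_B')$. This holds because $i\in L^2$, $\mathcal B_{\mathrm{em}}\in\mathcal L(\R^m,V_B')$, $\mathcal B_{\mathrm d}\in\mathcal L(Y_{\mathrm d},V_B')$, and $\|\bm u_{\bm B,L}(t)\|_{Y_{\mathrm d}}\le c_v\|L\|_\infty\|\bm B\|_{C([0,\delta];X_B)}$. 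By the uniqueness part of that theorem, \eqref{eq:coupled_voc_fixed_point_B} is then equivalent to
\[
\bm B\in H^1([0,\delta];V_B'),\qquad \dot{\bm B}+\mathcal A\bm B=f\ \text{ in } V_B' \text{ a.e.},\qquad \bm B(0)=\bm B_0 .
\]
In the direction (i)$\Rightarrow$(ii), $\bm B$ is a function in the regularity class of that theorem that solves the variational equation, so it coincides with the mild solution. In the converse direction, the mild formula defines the unique variational solution, which therefore has the stated regularity.

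The core step is to show that, for the reconstructed quadruple, the finite-dissipation Faraday identity in Definition~\ref{def:weak_solution} is literally $\langle\dot{\bm B},\mathfrak C\rangle+\mathfrak a(\bm B,\bm C)-\langle f,\bm C\rangle=0$. Take $\mathfrak C=(\bm C,\bm p,\bm k)$, so that $K\bm C=(\bm p,\bm k)$, and split $\int_{\Omega_{\mathrm c}}(\sigma^{1/2}\bm E)^\top\bm p$ into its $\Omega_{\mathrm{em}}$ and $\Omega_{\mathrm d}$ parts. Inserting \eqref{eq:weak_solution_reconstruction} on $\Omega_{\mathrm{em}}$ gives
\[
\sigma^{1/2}\bm E=(K\bm B)_1-\sigma^{-1/2}\chi i,
\]
and on $\Omega_{\mathrm d}$ it gives
\[
\sigma^{1/2}\bm E=(K\bm B)_1-\sigma^{1/2}\bm u_{\bm B,L}.
\]
Hence the conductive term equals
\[
\int_{\Omega_{\mathrm c}}(K\bm B)_1^\top\bm p-\langle\mathcal B_{\mathrm{em}}i,\bm C\rangle-\langle\mathcal B_{\mathrm d}\bm u,\bm C\rangle,
\]
where we use \eqref{eq:em_input_operator}, \eqref{eq:disk_input_operator}, and $\mathcal C_{\mathrm d}\bm C=\sigma^{1/2}\bm p|_{\Omega_{\mathrm d}}$. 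Adding the boundary term $\int\zeta\,\bm h^\top\bm k$ completes $\mathfrak a(\bm B,\bm C)$. The identity $\frac{\mathrm d}{\mathrm dt}\langle\bm B,\bm C\rangle_{X_B}=\langle\dot{\bm B},\bm C\rangle_{V_B',V_B}$ holds for $\bm B\in H^1([0,\delta];V_B')$ by the Hilbert-triple framework. Conversely, if only the weak derivative identity of Definition~\ref{def:weak_solution} is known, then together with $\bm B\in H^1([0,\delta];V_B')$ (which the definition assumes) it yields the $V_B'$ equation. The null set can be chosen uniformly in $\mathfrak C$ by separability, as in Proposition~\ref{prop:energy_balance_weak_solution}.

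The algebraic constraints hold automatically for the reconstruction. On $\Omega_{\mathrm{em}}$ we have $\sigma^{1/2}\bm q=\mathcal C_{\mathrm{em}}\bm B=\sigma\bm E+\chi i$ by definition. On $\Omega_{\mathrm d}$ we have $\bm J_{\mathrm d}=\mathcal C_{\mathrm d}\bm B=\sigma_{\mathrm d}(\bm E+\bm v_L\times\bm B)$. The regularity $\bm E,\bm J_{\mathrm d}\in L^2$ follows from $\bm B\in L^2([0,\delta];V_B)$, the $L^\infty$ coefficients, $i\in L^2$ and $L$ bounded. Since $\bm J_{\mathrm d}=\mathcal C_{\mathrm d}\bm B$, the angular momentum balance reads
\[
\dot L=\mathcal T(\bm B,\mathcal C_{\mathrm d}\bm B)+M_{\ext}.
\]
The right-hand side lies in $L^1$, since $|\mathcal T|\le c_{\mathcal T}\|\bm B\|_{C(X_B)}\|\mathcal C_{\mathrm d}\bm B\|_{Y_{\mathrm d}}$. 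Therefore $L\in W^{1,1}$ with $L(0)=L_0$ and this balance is equivalent to \eqref{eq:coupled_voc_fixed_point_L}. The implication from \eqref{eq:coupled_voc_fixed_point_L} to $L\in W^{1,1}$ uses the fundamental theorem of calculus for absolutely continuous functions.

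The main obstacle is the identification of the mild formula in $X_{-1/2}$ with the variational solution. The cleanest route is not to reprove it but to invoke the equivalence stated in Theorem~\ref{thm:inhomogeneous_mqs_equation}: the unique variational solution is given by the variation-of-constants formula. For (i)$\Rightarrow$(ii) we must ensure that $\bm B$ lies in the uniqueness class, which holds because Definition~\ref{def:weak_solution} requires $\bm B\in L^2([0,\delta];V_B)\cap H^1([0,\delta];V_B')$. A secondary care point is that $\mathfrak B(t)$ equals the reconstructed triple. This follows from Lemma~\ref{lem:finite_dissipation_representatives_unique}, which ensures that any admissible $\mathfrak B$ with first component $\bm B$ has $(\bm q,\bm h)=K\bm B$.
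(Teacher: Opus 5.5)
Your proposal is correct and takes essentially the same route as the paper. Both directions are reduced to Theorem~\ref{thm:inhomogeneous_mqs_equation}, applied with the fixed inhomogeneity $f=\mathcal B_{\mathrm{em}}i+\mathcal B_{\mathrm d}\bm u_{\bm B,L}\in L^2([0,\delta];V_B')$, and the $L$-equation is handled through absolute continuity of the integral. Your explicit splitting of $\int_{\Omega_{\mathrm c}}(\sigma^{1/2}\bm E)^\top\bm p\,\mathrm d\xi$ over $\Omega_{\mathrm{em}}$ and $\Omega_{\mathrm d}$ just spells out the step the paper summarizes as inserting the reconstruction formulas into the finite-dissipation form of Faraday's law.
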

\begin{proof}
Assume first that \eqref{item:weak_solution_voc_weak} holds. By
Definition~\ref{def:weak_solution},
$\bm B\in H^1([0,\delta];V_B')$,
$L\in W^{1,1}([0,\delta])$.
Inserting the algebraic constraints into the finite-dissipation form of
Faraday's law gives
\[
\dot{\bm B}(t)+\mathcal A\bm B(t)
=
\mathcal B_{\mathrm{em}}i(t)
+
\mathcal B_{\mathrm d}
\bigl(\bm v_{L(t)}\times\left.\bm B(t)\right|_{\Omega_{\mathrm d}}\bigr)
\quad\text{in }V_B'
\]
for almost every $t\in(0,\delta)$, with $\bm B(0)=\bm B_0$. Moreover, the
angular momentum balance gives
\[
\dot L(t)
=
\mathcal T\bigl(\bm B(t),\mathcal C_{\mathrm d}\bm B(t)\bigr)
+
M_{\ext}(t)
\]
for almost every $t\in(0,\delta)$, with $L(0)=L_0$.

Theorem~\ref{thm:inhomogeneous_mqs_equation} applied to the magnetic form
equation and integration of the scalar equation for $L$ yield
\eqref{eq:coupled_voc_fixed_point}. Hence
\eqref{item:weak_solution_voc_integral} holds.

Conversely, assume that \eqref{item:weak_solution_voc_integral} holds. Since
$i\in L^2([0,\delta];\R^m)$, we have
$\mathcal B_{\mathrm{em}}i\in L^2([0,\delta];V_B')$.
Moreover, since $L\in C([0,\delta])$ and
$\bm B\in C([0,\delta];X_B)$,
\[
\left[
t\mapsto
\bm v_{L(t)}\times\left.\bm B(t)\right|_{\Omega_{\mathrm d}}
\right]
\in L^\infty([0,\delta];Y_{\mathrm d}),
\]
and hence $\bm u_{\bm B,L}\in L^\infty([0,\delta];Y_{\mathrm d})$, $\mathcal B_{\mathrm d}\bm u_{\bm B,L}\in L^2([0,\delta];V_B')$.
Thus Theorem~\ref{thm:inhomogeneous_mqs_equation} implies
\[
\bm B\in L^2([0,\delta];V_B)\cap H^1([0,\delta];V_B')
\cap C([0,\delta];X_B),
\]
and the variation-of-constants identity for $\bm B$ is equivalent to the
magnetic form equation. The scalar integral equation gives
$L\in W^{1,1}([0,\delta])$ and the angular momentum balance almost everywhere.

It remains to check the reconstructed variables. By definition,
\[
\mathfrak B(t)=
\bigl(\bm B(t),(K\bm B(t))_1,(K\bm B(t))_2\bigr)
\]
belongs to $L^2([0,\delta];V_B)$. The reconstruction formulas give
\[
\bm J_{\mathrm d}\in L^2([0,\delta];L^2(\Omega_{\mathrm d};\R^3)),
\qquad
\bm E\in L^2([0,\delta];L^2(\Omega_{\mathrm c};\R^3)),
\]
and they give exactly the algebraic constraints in the electromagnet and in
the disk. The magnetic form equation, after inserting these reconstruction
formulas, is precisely the finite-dissipation form of Faraday's law in
Definition~\ref{def:weak_solution}. Therefore the reconstructed quadruple is a weak solution on $[0,\delta]$. Hence \eqref{item:weak_solution_voc_weak} holds.
\end{proof}
For later use, we record the following standard estimates for the input maps
associated with the form realization.
\begin{lemma}[Basic input-output estimates]
\label{lem:basic_input_output_estimates}
Assume Hypotheses~\ref{hyp:geometry}--\ref{hyp:disk}. Let $A=-K^*K$, where
$K$ is defined by \eqref{eq:finite_dissipation_operator}, and let
$(T(t))_{t\geq0}$ be the contraction semigroup generated by $A$. Let
$(T_{-1/2}(t))_{t\geq0}$ be the extrapolated semigroup on $V_B'$ introduced
in \eqref{eq:hilbert_scale}. Let $\mathcal C_{\mathrm d}$,
$\mathcal B_{\mathrm d}$, and $\mathcal B_{\mathrm{em}}$ be defined by
\eqref{eq:disk_current_observation}, \eqref{eq:disk_input_operator}, and
\eqref{eq:em_input_operator}.

For every $\delta>0$ and every $\bm B_0\in X_B$,
\[
T(\cdot)\bm B_0\in L^2([0,\delta];V_B)\cap C([0,\delta];X_B)
\]
and there exists a constant $c>0$, independent of $\delta$ and $\bm B_0$, such that
\[
\|\mathcal C_{\mathrm d}T(\cdot)\bm B_0\|_{L^2([0,\delta];Y_{\mathrm d})}
\leq
c\|\bm B_0\|_{X_B}.
\]
Moreover, let either
$(U,\mathcal B)=(Y_{\mathrm d},\mathcal B_{\mathrm d})$ or
$(U,\mathcal B)=(\R^m,\mathcal B_{\mathrm{em}})$.
For every $\delta>0$ and every $v\in L^2([0,\delta];U)$, the function
\[
t\mapsto
\bm z_v(t)
:=
\int_0^t
T_{-1/2}(t-\tau)\mathcal B v(\tau)\,\mathrm d\tau
\]
belongs to
\[
L^2([0,\delta];V_B)\cap H^1([0,\delta];V_B')\cap C([0,\delta];X_B).
\]
There is a constant $c>0$, independent of $\delta$ and $v$, and uniform for the
two choices of $(U,\mathcal B)$, such that
\[
\|\bm z_v\|_{C([0,\delta];X_B)}
+
\|\mathcal C_{\mathrm d}\bm z_v\|_{L^2([0,\delta];Y_{\mathrm d})}
\leq
c\|v\|_{L^2([0,\delta];U)}.
\]
In particular, if $v\in L^\infty([0,\delta];U)$, then
\[
\|\bm z_v\|_{C([0,\delta];X_B)}
\leq
c\sqrt{\delta}\,
\|v\|_{L^\infty([0,\delta];U)}.
\]
\end{lemma}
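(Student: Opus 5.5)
The plan is to derive all three estimates from the energy identity associated with the form $\mathfrak a$. The decisive point is that $\mathcal C_{\mathrm d}$ is controlled by the dissipation $\mathfrak a(\bm B,\bm B)^{1/2}$ alone, not by the full $V_B$-norm, which contains the $X_B$-part. Indeed, for $\bm C\in V_B$ we have
\[
\|\mathcal C_{\mathrm d}\bm C\|_{Y_{\mathrm d}}
\le \|\sigma^{1/2}\|_{L^\infty}\|(K\bm C)_1\|_{L^2(\Omega_{\mathrm c};\R^3)}
\le \|\sigma^{1/2}\|_{L^\infty}\,\mathfrak a(\bm C,\bm C)^{1/2}.
\]
By duality, the same argument gives
\[
|\langle\mathcal B v,\bm C\rangle_{V_B',V_B}|\le c_{\mathcal B}\|v\|_U\,\mathfrak a(\bm C,\bm C)^{1/2}
\]
for both choices of $(U,\mathcal B)$. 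This follows from \eqref{eq:disk_input_operator} and \eqref{eq:em_input_operator}, using $\sigma^{\pm1/2},\chi\in L^\infty$. These two bounds are what make the constants independent of $\delta$. The Gronwall argument in the proof of Theorem~\ref{thm:inhomogeneous_mqs_equation} produces a $T$-dependent constant $C_T$ precisely because it controls the inhomogeneity by the full $V_B$-norm.

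For the homogeneous part, Theorem~\ref{thm:inhomogeneous_mqs_equation} with $f=0$ shows that $\bm B=T(\cdot)\bm B_0$ lies in $L^2([0,\delta];V_B)\cap C([0,\delta];X_B)$. Testing with $\bm B(t)$ and using the Hilbert-triple chain rule gives $\tfrac12\tfrac{\mathrm d}{\mathrm dt}\|\bm B\|_{X_B}^2+\mathfrak a(\bm B,\bm B)=0$. Integrating over $[0,\delta]$ yields
\[
\int_0^\delta\mathfrak a(\bm B,\bm B)\,\mathrm dt\le\tfrac12\|\bm B_0\|_{X_B}^2 .
\]
Combined with the first bound above, this gives the observation estimate with $c=\|\sigma^{1/2}\|_{L^\infty}/\sqrt2$, which is independent of $\delta$.

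For the input part, Theorem~\ref{thm:inhomogeneous_mqs_equation} with $\bm B_0=0$ and $f=\mathcal B v\in L^2([0,\delta];V_B')$ gives the asserted regularity of $\bm z_v$ together with the variation-of-constants representation. The main step is the uniform estimate. Testing $\dot{\bm z}_v+\mathcal A\bm z_v=\mathcal B v$ with $\bm z_v$ and applying Young's inequality to the dual bound gives
\[
\tfrac12\tfrac{\mathrm d}{\mathrm dt}\|\bm z_v\|_{X_B}^2+\mathfrak a(\bm z_v,\bm z_v)
\le c_{\mathcal B}\|v\|_U\,\mathfrak a(\bm z_v,\bm z_v)^{1/2}
\le\tfrac12\mathfrak a(\bm z_v,\bm z_v)+\tfrac{c_{\mathcal B}^2}{2}\|v\|_U^2 .
\]
Integrating from $0$ to $t\le\delta$ yields
\[
\|\bm z_v(t)\|_{X_B}^2+\int_0^t\mathfrak a(\bm z_v,\bm z_v)\,\mathrm d\tau\le c_{\mathcal B}^2\|v\|_{L^2([0,\delta];U)}^2 .
\]
No Gronwall step is needed, so the constant does not depend on $\delta$. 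Taking the supremum over $t$ and applying the first bound to $\mathcal C_{\mathrm d}\bm z_v$ gives the main estimate. The constant is uniform for the two choices of $(U,\mathcal B)$ if we take the larger of the two values $c_{\mathcal B}$.

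Finally, the $L^\infty$ estimate follows from $\|v\|_{L^2([0,\delta];U)}\le\sqrt\delta\,\|v\|_{L^\infty([0,\delta];U)}$. The only point requiring care, and the main potential obstacle, is to make sure that every constant comes from $\mathfrak a$ and not from the $V_B$-norm. This is exactly where the estimate of Theorem~\ref{thm:inhomogeneous_mqs_equation} would be insufficient if it were quoted directly.
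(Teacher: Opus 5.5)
Your proposal is correct and follows the paper's proof essentially step for step. Both arguments rest on the bound $\|\mathcal C_{\mathrm d}\bm C\|_{Y_{\mathrm d}}\le c\,\mathfrak a(\bm C,\bm C)^{1/2}$ and its dual counterpart for $\mathcal B_{\mathrm d}$ and $\mathcal B_{\mathrm{em}}$, then use the energy identity with Young's inequality absorbing half of the dissipation (no Gronwall step, hence constants independent of $\delta$), and finish with Cauchy--Schwarz in time for the $L^\infty$ estimate.
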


\begin{proof}
The homogeneous regularity follows from
Theorem~\ref{thm:inhomogeneous_mqs_equation} with $f=0$. The energy identity
for the homogeneous form equation gives
\[
\|T(t)\bm B_0\|_{X_B}^2
+
2\int_0^t
\mathfrak a(T(\tau)\bm B_0,T(\tau)\bm B_0)
\,\mathrm d\tau
\leq
\|\bm B_0\|_{X_B}^2,
\qquad t\geq0.
\]
Since
\[
\|\mathcal C_{\mathrm d}\bm C\|_{Y_{\mathrm d}}
\leq
c\,\mathfrak a(\bm C,\bm C)^{1/2},
\qquad \bm C\in V_B,
\]
we obtain
\[
\|\mathcal C_{\mathrm d}T(\cdot)\bm B_0\|_{L^2([0,\delta];Y_{\mathrm d})}
\leq
c\|\bm B_0\|_{X_B}.
\]
The membership $T(\cdot)\bm B_0\in L^2([0,\delta];V_B)$ follows because
$\delta<\infty$, $T(\cdot)\bm B_0\in C([0,\delta];X_B)$, and the preceding
estimate controls the dissipation part.

Let now $(U,\mathcal B)$ be one of the two stated choices and let
$v\in L^2([0,\delta];U)$. By
Theorem~\ref{thm:inhomogeneous_mqs_equation}, the function $\bm z_v$ is the
unique solution of
\[
\dot{\bm z}_v(t)+\mathcal A\bm z_v(t)=\mathcal B v(t)
\quad\text{in }V_B',
\qquad
\bm z_v(0)=0.
\]
Testing this equation with $\bm z_v(t)$ gives
\[
\tfrac12\tfrac{\mathrm d}{\mathrm dt}\|\bm z_v(t)\|_{X_B}^2
+
\mathfrak a(\bm z_v(t),\bm z_v(t))
=
\langle \mathcal B v(t),\bm z_v(t)\rangle_{V_B',V_B}.
\]
For $\mathcal B=\mathcal B_{\mathrm d}$, the definition
\eqref{eq:disk_input_operator} and the estimate for $\mathcal C_{\mathrm d}$
give
\[
\left|
\langle \mathcal B_{\mathrm d}v(t),\bm z_v(t)\rangle_{V_B',V_B}
\right|
\leq
c\|v(t)\|_{Y_{\mathrm d}}\,
\mathfrak a(\bm z_v(t),\bm z_v(t))^{1/2}.
\]
For $\mathcal B=\mathcal B_{\mathrm{em}}$, the definition
\eqref{eq:em_input_operator} gives the same estimate with
$\|v(t)\|_{\R^m}$. Hence, in both cases,
\[
\left|
\langle \mathcal B v(t),\bm z_v(t)\rangle_{V_B',V_B}
\right|
\leq
\tfrac12\mathfrak a(\bm z_v(t),\bm z_v(t))
+
c\|v(t)\|_U^2.
\]
After integration in time, this yields
\[
\|\bm z_v\|_{C([0,\delta];X_B)}^2
+
\int_0^\delta
\mathfrak a(\bm z_v(\tau),\bm z_v(\tau))
\,\mathrm d\tau
\leq
c\|v\|_{L^2([0,\delta];U)}^2.
\]
Since $\delta<\infty$, this implies
$\bm z_v\in L^2([0,\delta];V_B)\cap C([0,\delta];X_B)$. The equation gives
$\bm z_v\in H^1([0,\delta];V_B')$. Finally, using again
$\|\mathcal C_{\mathrm d}\bm C\|_{Y_{\mathrm d}}
\leq
c\,\mathfrak a(\bm C,\bm C)^{1/2}$
gives
\[
\|\bm z_v\|_{C([0,\delta];X_B)}
+
\|\mathcal C_{\mathrm d}\bm z_v\|_{L^2([0,\delta];Y_{\mathrm d})}
\leq
c\|v\|_{L^2([0,\delta];U)}.
\]
The $L^\infty$-estimate follows from
\[
\|v\|_{L^2([0,\delta];U)}
\leq
\sqrt{\delta}\,
\|v\|_{L^\infty([0,\delta];U)}.\qedhere
\]
\end{proof}
\begin{proposition}[Local solvability and blow-up alternative]
\label{prop:local_solvability_integral_system}
Assume Hypotheses~\ref{hyp:geometry}--\ref{hyp:disk}. Let
$(\bm B_0,L_0)\in X_B\times\R$. Then there exists a unique maximal solution
\[
(\bm B,L)\in
\bigl(
L^2_{\mathrm{loc}}([0,t_{\max});V_B)
\cap
C([0,t_{\max});X_B)
\bigr)
\times
C([0,t_{\max}))
\]
of the integrated variation-of-constants system
\eqref{eq:coupled_voc_fixed_point}, where
$t_{\max}\in(0,\infty]$.

More precisely, for every $\delta<t_{\max}$, the restriction of
$(\bm B,L)$ to $[0,\delta]$ satisfies
\eqref{eq:coupled_voc_fixed_point}. If $t_{\max}<\infty$, then
\[
\limsup_{t\nearrow t_{\max}}
\bigl(
\|\bm B(t)\|_{X_B}+|L(t)|
\bigr)
=
\infty.
\]
\end{proposition}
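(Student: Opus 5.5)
The plan is a Picard iteration for \eqref{eq:coupled_voc_fixed_point} on a short interval, with the contraction constant proportional to $\sqrt{\delta}$. The contraction time should depend only on the size of the data, so that the solution can be continued and the blow-up alternative follows. For $\delta>0$ I work in the Banach space
\[
Z_\delta:=\bigl\{(\bm B,L):\ \bm B\in C([0,\delta];X_B),\ \mathcal C_{\mathrm d}\bm B\in L^2([0,\delta];Y_{\mathrm d}),\ L\in C([0,\delta])\bigr\},
\]
normed by $\|\bm B\|_{C([0,\delta];X_B)}+\|\mathcal C_{\mathrm d}\bm B\|_{L^2([0,\delta];Y_{\mathrm d})}+\|L\|_{C([0,\delta])}$. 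Here $\mathcal C_{\mathrm d}\bm B$ is understood for $\bm B\in L^2([0,\delta];V_B)$; this is harmless because the fixed-point map always produces such functions, by Lemma~\ref{lem:basic_input_output_estimates}. The map $\Phi(\bm B,L)=(\tilde{\bm B},\tilde L)$ is defined by the right-hand sides of \eqref{eq:coupled_voc_fixed_point}. I restrict $\Phi$ to the closed ball $\mathcal M_\delta$ of radius $R:=2c(\|\bm B_0\|_{X_B}+1)+|L_0|+1$, where $c$ is the constant of Lemma~\ref{lem:basic_input_output_estimates}.

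\emph{Self-mapping.} By Lemma~\ref{lem:basic_input_output_estimates}, the free part $T(\cdot)\bm B_0$ contributes at most $c\|\bm B_0\|_{X_B}$ in the $Z_\delta$-norm, uniformly in $\delta$. This part is not small as $\delta\to0$, so smallness has to come from the other terms.
\begin{itemize}
\item The input term contributes at most $c\|i\|_{L^2([0,\delta])}$.
\item The coupling term satisfies $\|\bm u_{\bm B,L}\|_{L^\infty([0,\delta];Y_{\mathrm d})}\le c_vR^2$, so by Lemma~\ref{lem:basic_input_output_estimates} its contribution, including the $\mathcal C_{\mathrm d}$-part, is at most $c\sqrt\delta\,c_vR^2$.
\item For $L$, Cauchy--Schwarz in time gives
\[
\int_0^\delta|\mathcal T(\bm B,\mathcal C_{\mathrm d}\bm B)|\,\mathrm d\tau\le c_{\mathcal T}\sqrt\delta\,\|\bm B\|_{C([0,\delta];X_B)}\|\mathcal C_{\mathrm d}\bm B\|_{L^2([0,\delta];Y_{\mathrm d})}\le c_{\mathcal T}\sqrt\delta R^2,
\]
and the external torque contributes $\|M_{\ext}\|_{L^1([0,\delta])}$.
\end{itemize}
Hence $\Phi(\mathcal M_\delta)\subset\mathcal M_\delta$ as soon as $\sqrt\delta$, $\|i\|_{L^2([0,\delta])}$ and $\|M_{\ext}\|_{L^1([0,\delta])}$ are small.

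\emph{Contraction.} Bilinearity gives
\[
\bm u_{\bm B_1,L_1}-\bm u_{\bm B_2,L_2}=\bm v_{L_1-L_2}\times\bm B_1+\bm v_{L_2}\times(\bm B_1-\bm B_2),
\]
whose $L^\infty$-norm is at most $2c_vR$ times the $Z_\delta$-distance. For the torque,
\[
\mathcal T(\bm B_1,\mathcal C_{\mathrm d}\bm B_1)-\mathcal T(\bm B_2,\mathcal C_{\mathrm d}\bm B_2)=\mathcal T(\bm B_1-\bm B_2,\mathcal C_{\mathrm d}\bm B_1)+\mathcal T(\bm B_2,\mathcal C_{\mathrm d}(\bm B_1-\bm B_2)),
\]
and after time integration with Cauchy--Schwarz this is at most $2c_{\mathcal T}\sqrt\delta R$ times the distance. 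So the Lipschitz constant of $\Phi$ on $\mathcal M_\delta$ is at most $C\sqrt\delta R$, and Banach's fixed-point theorem gives a unique fixed point in $\mathcal M_\delta$.

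\emph{Regularity and uniqueness.} By Proposition~\ref{prop:weak_solution_voc_equivalence} and Theorem~\ref{thm:inhomogeneous_mqs_equation}, any fixed point lies in $L^2([0,\delta];V_B)\cap C([0,\delta];X_B)$. Uniqueness in the full class, not just in the ball, follows the usual way. Take two solutions and the first time $t_*$ at which they differ. Near $t_*$ both have $Z$-norms bounded by some $R'$, because $\bm B\in L^2_{\mathrm{loc}}(V_B)$ implies $\mathcal C_{\mathrm d}\bm B\in L^2_{\mathrm{loc}}$. The difference estimate above with $R'$ on $[t_*,t_*+\delta]$ then forces the two to agree there, a contradiction.

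\emph{Continuation and blow-up alternative.} The problem is autonomous apart from the inputs, and the semigroup property of $T$ and $T_{-1/2}$ lets me restart \eqref{eq:coupled_voc_fixed_point} at any time $s$ with data $(\bm B(s),L(s))$ and shifted inputs. The key point is the choice of $\delta$. It depends only on $R$, that is on $\|\bm B(s)\|_{X_B}+|L(s)|$, and on the moduli of the maps $s\mapsto\|i\|_{L^2([s,s+\delta])}$ and $s\mapsto\|M_{\ext}\|_{L^1([s,s+\delta])}$. On compact time intervals these moduli are uniform by absolute continuity of the integral. Glueing local solutions gives a unique maximal solution on $[0,t_{\max})$. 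Now suppose $t_{\max}<\infty$ while $\|\bm B(t)\|_{X_B}+|L(t)|$ stays bounded. Then a uniform $\delta$ works for every restart time $s<t_{\max}$, and restarting at $s>t_{\max}-\delta$ extends the solution beyond $t_{\max}$, which is a contradiction.

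The main obstacle I expect is exactly this uniformity of $\delta$. The free part $\mathcal C_{\mathrm d}T(\cdot)\bm B_0$ is not small in $L^2$ for small $\delta$ uniformly over bounded sets of data. The argument therefore has to get all smallness from the $\sqrt\delta$ factors, coming from the $L^\infty$-bound on $\bm u$ and from Cauchy--Schwarz in the torque integral, and must not rely on smallness of the dissipation norm. If this failed, I would instead use the energy-type estimates of Theorem~\ref{thm:inhomogeneous_mqs_equation}.
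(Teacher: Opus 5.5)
Your proposal is correct and follows essentially the paper's argument. It uses a Banach fixed point on a short interval in which all smallness comes from factors $\sqrt{\delta}$ (via Lemma~\ref{lem:basic_input_output_estimates} and H\"older in time, with the free dissipation $\mathcal C_{\mathrm d}T(\cdot)\bm B_0$ only bounded, not small), followed by restarting with a step that is uniform under a bound on $\|\bm B(t)\|_{X_B}+|L(t)|$.

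The only real difference is that the paper inserts the \emph{output} $\bm B=T(\cdot)\bm B_0+\bm z_i+\bm z_{\bm b,\ell}$ into the torque integral, so it can work in $C([0,\delta];X_B)\times C([0,\delta])$ with sup-norms alone. Your $Z_\delta$ carries $\|\mathcal C_{\mathrm d}\bm B\|_{L^2([0,\delta];Y_{\mathrm d})}$ and is not complete if $\mathcal C_{\mathrm d}$ is only defined on $L^2([0,\delta];V_B)$. This is easily repaired by reading $\mathcal C_{\mathrm d}\bm B$ as the distributional $\curl(\nu\bm B)|_{\Omega_{\mathrm d}}$, or by noting that the Picard iterates also converge in $L^2([0,\delta];V_B)$ by Theorem~\ref{thm:inhomogeneous_mqs_equation}.
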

\begin{proof}
Fix $T_*>0$. For $0<\delta\leq T_*$ and
$(\bm b,\ell)\in
C([0,\delta];X_B)\times C([0,\delta];\R)$, define
\[
\bm u_{\bm b,\ell}(t)
:=
\bm v_{\ell(t)}\times
\left.\bm b(t)\right|_{\Omega_{\mathrm d}},
\qquad t\in[0,\delta].
\]
Let
\[
\bm z_i^{T_*}(t)
:=
\int_0^t
T_{-1/2}(t-\tau)\mathcal B_{\mathrm{em}}i(\tau)\,\mathrm d\tau,
\qquad t\in[0,T_*],
\]
and, for $0<\delta\leq T_*$, let
$\bm z_i
:=
\bm z_i^{T_*}\vert_{[0,\delta]}$.
Furthermore, set
\[
\bm z_{\bm b,\ell}(t)
:=
\int_0^t
T_{-1/2}(t-\tau)\mathcal B_{\mathrm d}
\bm u_{\bm b,\ell}(\tau)
\,\mathrm d\tau.
\]
By Lemma~\ref{lem:basic_input_output_estimates},
$\bm z_i,\bm z_{\bm b,\ell}\in C([0,\delta];X_B)$.
Equip
$C([0,\delta];X_B)\times C([0,\delta];\R)$
with the norm
\[
\|(\bm b,\ell)\|_\delta
:=
\max
\left\{
\|\bm b\|_{C([0,\delta];X_B)},
\|\ell\|_{C([0,\delta])}
\right\}.
\]
Choose $R\geq1$ such that
\[
\|\bm B_0\|_{X_B}
+
|L_0|
+
\|\bm z_i^{T_*}\|_{C([0,T_*];X_B)}
+
\|M_{\ext}\|_{L^1([0,T_*])}
\leq R.
\]
For $0<\delta\leq T_*$, define the closed ball
\[
\mathcal S_{\delta,R}
:=
\left\{
(\bm b,\ell)\in
C([0,\delta];X_B)\times C([0,\delta];\R):
\|(\bm b,\ell)\|_\delta\leq2R
\right\}.
\]
For $(\bm b,\ell)\in\mathcal S_{\delta,R}$, define
\begin{align*}
\bm B(t)
&:=
T(t)\bm B_0+\bm z_i(t)+\bm z_{\bm b,\ell}(t),\\
L(t)
&:=
L_0
+
\int_0^t
\mathcal T\bigl(\bm B(\tau),\mathcal C_{\mathrm d}\bm B(\tau)\bigr)
\,\mathrm d\tau
+
\int_0^t
M_{\ext}(\tau)\,\mathrm d\tau.
\end{align*}
This defines a map
\[
\mathcal F_{\delta,R}:\mathcal S_{\delta,R}\to
C([0,\delta];X_B)\times C([0,\delta];\R).
\]
We first show that $\mathcal F_{\delta,R}$ maps $\mathcal S_{\delta,R}$ into
itself for sufficiently small $\delta>0$. If $(\bm b,\ell)\in\mathcal S_{\delta,R}$, then
\[
\|\bm u_{\bm b,\ell}\|_{L^\infty([0,\delta];Y_{\mathrm d})}
\leq
4c_vR^2.
\]
Hence Lemma~\ref{lem:basic_input_output_estimates} yields
\[
\|\bm z_{\bm b,\ell}\|_{C([0,\delta];X_B)}
\leq
4c\,c_v\sqrt{\delta}\,R^2.
\]
Consequently,
\[
\|\bm B\|_{C([0,\delta];X_B)}
\leq
\|\bm B_0\|_{X_B}
+
\|\bm z_i^{T_*}\|_{C([0,T_*];X_B)}
+
4c\,c_v\sqrt{\delta}\,R^2.
\]
Thus, for $\delta>0$ sufficiently small,
\[
\|\bm B\|_{C([0,\delta];X_B)}
\leq
2R.
\]
Moreover,
\[
\|L-L_0\|_{C([0,\delta])}
\leq
\int_0^\delta
|\mathcal T(\bm B(\tau),\mathcal C_{\mathrm d}\bm B(\tau))|
\,\mathrm d\tau
+
\|M_{\ext}\|_{L^1([0,\delta])}.
\]
Using the boundedness of $\mathcal T$ and Hölder's inequality, we obtain
\[
\int_0^\delta
|\mathcal T(\bm B(\tau),\mathcal C_{\mathrm d}\bm B(\tau))|
\,\mathrm d\tau
\leq
c_{\mathcal T}
\|\bm B\|_{C([0,\delta];X_B)}
\sqrt{\delta}\,
\|\mathcal C_{\mathrm d}\bm B\|_{L^2([0,\delta];Y_{\mathrm d})}.
\]
The inhomogeneous MQS estimate applied to the three terms
$T(\cdot)\bm B_0$, $\bm z_i$, and $\bm z_{\bm b,\ell}$ gives a bound for
$\|\mathcal C_{\mathrm d}\bm B\|_{L^2([0,\delta];Y_{\mathrm d})}$ depending only
on $R$, $\|\bm B_0\|_{X_B}$, and $\|i\|_{L^2([0,T_*];\R^m)}$, for
$0<\delta\leq T_*$. Thus the last display tends to zero as
$\delta\downarrow0$. Also
$\|M_{\ext}\|_{L^1([0,\delta])}\to0$ as $\delta\downarrow0$. Hence, decreasing
$\delta>0$ if necessary,
$\|L\|_{C([0,\delta])}\leq 2R$. 
It follows that
$\|(\bm B,L)\|_\delta\leq2R$, and hence
$\mathcal F_{\delta,R}$ maps $\mathcal S_{\delta,R}$ into itself.

We next show that $\mathcal F_{\delta,R}$ is a contraction for sufficiently
small $\delta>0$. Let
$(\bm b_1,\ell_1),(\bm b_2,\ell_2)\in\mathcal S_{\delta,R}$
and set
\[
d_\delta
:=
\|(\bm b_1-\bm b_2,\ell_1-\ell_2)\|_\delta.
\]
Since both pairs belong to $\mathcal S_{\delta,R}$, there is a constant
$C_R>0$, independent of $\delta$, such that
\[
\|
\bm u_{\bm b_1,\ell_1}
-
\bm u_{\bm b_2,\ell_2}
\|_{L^\infty([0,\delta];Y_{\mathrm d})}
\leq
C_R d_\delta.
\]
Indeed,
\[
\begin{split}
\bm u_{\bm b_1,\ell_1}
-
\bm u_{\bm b_2,\ell_2}
&=
\bm v_{\ell_1}\times(\bm b_1-\bm b_2)
+
\bm v_{\ell_1-\ell_2}\times\bm b_2,
\end{split}
\]
and the asserted estimate follows from
\eqref{eq:disk_velocity_field} and the bounds defining
$\mathcal S_{\delta,R}$.

Let
\[
\bm z_{12}(t)
:=
\int_0^t
T_{-1/2}(t-\tau)\mathcal B_{\mathrm d}
\bigl(
\bm u_{\bm b_1,\ell_1}(\tau)
-
\bm u_{\bm b_2,\ell_2}(\tau)
\bigr)
\,\mathrm d\tau.
\]
For the corresponding magnetic components
$\bm B_1,\bm B_2$, we have
$\bm B_1-\bm B_2=\bm z_{12}$.
Lemma~\ref{lem:basic_input_output_estimates} therefore gives
\[
\|\bm B_1-\bm B_2\|_{C([0,\delta];X_B)}
+
\|
\mathcal C_{\mathrm d}(\bm B_1-\bm B_2)
\|_{L^2([0,\delta];Y_{\mathrm d})}
\leq
C_R\sqrt{\delta}\,d_\delta.
\]

Moreover, the self-mapping estimates and
Lemma~\ref{lem:basic_input_output_estimates} imply that there is a constant
$C_{R,T_*}>0$, independent of $\delta$, such that
\[
\|\bm B_j\|_{C([0,\delta];X_B)}
+
\|\mathcal C_{\mathrm d}\bm B_j\|_{L^2([0,\delta];Y_{\mathrm d})}
\leq
C_{R,T_*},
\qquad j=1,2.
\]
Using the bilinearity and boundedness of $\mathcal T$, we obtain
\[
\begin{split}
|L_1(t)-L_2(t)|
&\leq
\int_0^t
\left|
\mathcal T
\bigl(
\bm B_1(\tau),
\mathcal C_{\mathrm d}\bm B_1(\tau)
\bigr)
-
\mathcal T
\bigl(
\bm B_2(\tau),
\mathcal C_{\mathrm d}\bm B_2(\tau)
\bigr)
\right|
\,\mathrm d\tau
\\
&\leq
c_{\mathcal T}
\|\bm B_1-\bm B_2\|_{C([0,\delta];X_B)}
\|
\mathcal C_{\mathrm d}\bm B_1
\|_{L^1([0,\delta];Y_{\mathrm d})}
+
c_{\mathcal T}
\|\bm B_2\|_{C([0,\delta];X_B)}
\|
\mathcal C_{\mathrm d}(\bm B_1-\bm B_2)
\|_{L^1([0,\delta];Y_{\mathrm d})}.
\end{split}
\]
By Hölder's inequality and the preceding estimates,
$\|L_1-L_2\|_{C([0,\delta])}
\leq
C_{R,T_*}\delta\,d_\delta$.
Consequently,
\[
\|
\mathcal F_{\delta,R}(\bm b_1,\ell_1)
-
\mathcal F_{\delta,R}(\bm b_2,\ell_2)
\|_\delta
\leq
C_{R,T_*}
\bigl(
\sqrt{\delta}+\delta
\bigr)
d_\delta.
\]
Choosing $\delta>0$ sufficiently small, the constant on the right-hand side
is strictly smaller than one. Hence
$\mathcal F_{\delta,R}$ is a strict contraction on
$\mathcal S_{\delta,R}$, and Banach's fixed point theorem gives a unique
fixed point in $\mathcal S_{\delta,R}$.

This fixed point is also locally unique in the full solution class. Indeed,
any solution with the same initial value belongs, by continuity, to
$\mathcal S_{\eta,R}$ after possibly decreasing its interval of definition
to some $\eta>0$. It must therefore coincide with the fixed point on
$[0,\eta]$. Repeating this argument proves uniqueness on every common
interval of existence.

The local construction can be restarted at every time
$t_0$ at which the solution is defined, with initial value
$(\bm B(t_0),L(t_0))$. By the local uniqueness just proved, the resulting
local solutions agree on overlapping intervals. They therefore concatenate
to a unique maximal solution on an interval
$[0,t_{\max})$, where $t_{\max}\in(0,\infty]$.

Suppose that $t_{\max}<\infty$ and that
\[
\sup_{0\leq t<t_{\max}}
\bigl(
\|\bm B(t)\|_{X_B}+|L(t)|
\bigr)
<\infty.
\]
Choose $R_*>0$ such that
$\|\bm B(t)\|_{X_B}+|L(t)|
\leq R_*$ for all $t\in[0,t_{\max})$.
Since
$i\in L^2([0,t_{\max}+1];\R^m)$,
$M_{\ext}\in L^1([0,t_{\max}+1])$,
the absolute continuity of the Lebesgue integral implies that
$\|i\|_{L^2([t_0,t_0+\delta];\R^m)}$ and
$\|M_{\ext}\|_{L^1([t_0,t_0+\delta])}$
can be made uniformly small for
$t_0\in[0,t_{\max}]$ by choosing $\delta>0$ sufficiently small.

Applying the above fixed-point construction after time translation therefore
gives a number $\delta_*>0$, depending only on $R_*$, the fixed material and
geometric constants, and the input norms on $[0,t_{\max}+1]$, but not on the
starting time $t_0$, such that the solution can be continued from every
$t_0<t_{\max}$ to $[t_0,t_0+\delta_*]$.

Choosing $t_0<t_{\max}$ sufficiently close to $t_{\max}$ gives
$t_0+\delta_*>t_{\max}$ and hence extends the solution beyond
$t_{\max}$, contradicting maximality. Therefore, whenever $t_{\max}<\infty$,
\[
\limsup_{t\nearrow t_{\max}}
\bigl(
\|\bm B(t)\|_{X_B}+|L(t)|
\bigr)
=
\infty.
\]
\end{proof}
\begin{theorem}[Global solvability and uniqueness of weak solutions]
\label{thm:weak_solvability}
Assume Hypotheses~\ref{hyp:geometry}--\ref{hyp:disk}. Let $T>0$ and let
$(\bm B_0,L_0)\in X_B\times\R$. Then the integrated
variation-of-constants system \eqref{eq:coupled_voc_fixed_point} has a unique
solution
\[
(\bm B,L)\in
\bigl(L^2([0,T];V_B)\cap C([0,T];X_B)\bigr)
\times C([0,T]).
\]
Moreover, the reconstructed quadruple
$(\mathfrak B,L,\bm E,\bm J_{\mathrm d})$ defined by
\eqref{eq:weak_solution_reconstruction} is the unique weak solution on
$[0,T]$ in the sense of Definition~\ref{def:weak_solution}.

The weak solution satisfies
\begin{align*}
\bm B&\in L^2([0,T];V_B)\cap H^1([0,T];V_B')\cap C([0,T];X_B),&
L&\in W^{1,1}([0,T]),
\\
\bm E&\in L^2([0,T];L^2(\Omega_{\mathrm c};\R^3)),
&\bm J_{\mathrm d}&\in L^2([0,T];L^2(\Omega_{\mathrm d};\R^3)).
\end{align*}
Furthermore, there is a constant $C>0$, depending only on the material and
geometric constants and on $I_{\mathrm d}$, such that
\[
\sup_{t\in[0,T]}
\left(
\|\bm B(t)\|_{X_B}^2+|L(t)|^2
\right)
\leq
C
\left(
\|\bm B_0\|_{X_B}^2+|L_0|^2
+
\|i\|_{L^2([0,T];\R^m)}^2
+
\|M_{\ext}\|_{L^1([0,T])}^2
\right).
\]
Finally, the weak solution satisfies the energy balance stated in
Proposition~\ref{prop:energy_balance_weak_solution}.
\end{theorem}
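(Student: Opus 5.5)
The plan is to combine Proposition~\ref{prop:local_solvability_integral_system} with the energy balance of Proposition~\ref{prop:energy_balance_weak_solution}, so that the blow-up alternative cannot occur. Let $(\bm B,L)$ be the maximal solution on $[0,t_{\max})$ given by Proposition~\ref{prop:local_solvability_integral_system}. For each $\delta<t_{\max}$, its restriction to $[0,\delta]$ lies in $\bigl(L^2([0,\delta];V_B)\cap C([0,\delta];X_B)\bigr)\times C([0,\delta])$. By Proposition~\ref{prop:weak_solution_voc_equivalence}, the reconstructed quadruple \eqref{eq:weak_solution_reconstruction} is then a weak solution on $[0,\delta]$. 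That proposition also supplies the stated regularity $\bm B\in H^1([0,\delta];V_B')$, $L\in W^{1,1}([0,\delta])$, $\bm E,\bm J_{\mathrm d}\in L^2$. Consequently Proposition~\ref{prop:energy_balance_weak_solution} applies on every such $[0,\delta]$.

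The main step is to derive an a priori bound from the energy balance that does not depend on $\delta$. Put $m(t):=\sup_{\tau\le t}\mathcal E(\tau)$ and $D(t):=\int_0^t\|\bm q\|^2\,\mathrm d\tau$. Since $\sigma^{-1/2},\chi\in L^\infty$, the electrical supply term satisfies
\[
\Bigl|\int_0^t\!\!\int_{\Omega_{\mathrm{em}}}(\sigma^{-1/2}\chi i)^\top\bm q\,\mathrm d\xi\,\mathrm d\tau\Bigr|\le \tfrac12 D(t)+c\|i\|_{L^2([0,t];\R^m)}^2 .
\]
The $\tfrac12D(t)$ part is absorbed on the left-hand side. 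For the mechanical term we use
\[
\int_0^t I_{\mathrm d}^{-1}|L||M_{\ext}|\,\mathrm d\tau\le \sqrt{2/I_{\mathrm d}}\,m(t)^{1/2}\|M_{\ext}\|_{L^1([0,t])}\le \tfrac12 m(t)+I_{\mathrm d}^{-1}\|M_{\ext}\|_{L^1([0,t])}^2 .
\]
Taking the supremum over $t$ on the left and absorbing $\tfrac12m$ gives
\[
m(t)\le 2\mathcal E(0)+c\|i\|_{L^2}^2+c\|M_{\ext}\|_{L^1}^2 .
\]
Since $\mathcal E$ is equivalent to $\|\bm B\|_{X_B}^2+|L|^2$ with constants depending only on $I_{\mathrm d}$, this is exactly the asserted estimate on $[0,\delta]$. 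Its right-hand side is bounded uniformly for $\delta<\min(t_{\max},T')$ and any finite $T'$. If $t_{\max}<\infty$, choosing $T'>t_{\max}$ contradicts the blow-up alternative. Hence $t_{\max}=\infty$, and restricting to $[0,T]$ gives a solution in the required class. The estimate and the energy balance then hold on $[0,T]$.

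Uniqueness has two parts.
\begin{itemize}
\item Uniqueness of the integral system follows from the uniqueness part of Proposition~\ref{prop:local_solvability_integral_system}.
\item For weak solutions, let $(\mathfrak B,L,\bm E,\bm J_{\mathrm d})$ be any weak solution on $[0,T]$. By Lemma~\ref{lem:finite_dissipation_representatives_unique}, $\mathfrak B$ is determined by $\bm B$, so $\bm q=(K\bm B)_1$ and $\bm h=(K\bm B)_2$. The algebraic constraints then force $\bm J_{\mathrm d}=\mathcal C_{\mathrm d}\bm B$ and give $\bm E$ by the formulas of \eqref{eq:weak_solution_reconstruction}, using invertibility of $\sigma_{\mathrm{em}}$ and $\sigma_{\mathrm d}$. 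So the quadruple coincides with the reconstruction from $(\bm B,L)$. By Proposition~\ref{prop:weak_solution_voc_equivalence}, $(\bm B,L)$ solves \eqref{eq:coupled_voc_fixed_point}, and therefore equals the solution constructed above.
\end{itemize}

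I expect the main obstacle to be the absorption argument in the energy estimate. The supply terms must be controlled by the dissipation $\|\bm q\|^2$ and by $\sup\mathcal E$, and not by anything that can grow. It also has to be checked that the energy balance applies on every compact subinterval of $[0,t_{\max})$. For the latter, the restarting construction shows that $L^2_{\mathrm{loc}}$ regularity of the maximal solution gives the hypotheses of Proposition~\ref{prop:weak_solution_voc_equivalence} on each $[0,\delta]$.
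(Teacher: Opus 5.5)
Your proposal is correct and follows the paper's proof. The ingredients are the same: the maximal solution from Proposition~\ref{prop:local_solvability_integral_system}, an energy a~priori bound (Lorentz cancellation, Young absorption of the coil supply into the dissipation) that rules out blow-up, reconstruction via Proposition~\ref{prop:weak_solution_voc_equivalence}, and uniqueness via Lemma~\ref{lem:finite_dissipation_representatives_unique}. The only differences are minor: you apply Proposition~\ref{prop:energy_balance_weak_solution} to the reconstructed weak solution on each $[0,\delta]$ rather than re-deriving the cancellation in the abstract form equation, and you absorb the torque term by Young's inequality rather than by solving the quadratic inequality for $Y(t)$.
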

\begin{proof}
Let $[0,t_{\max})$ be the maximal interval of existence given by
Proposition~\ref{prop:local_solvability_integral_system}. We first derive an
a priori bound for $\|\bm B(t)\|_{X_B}+|L(t)|$ on compact subintervals of
$[0,t_{\max})$.

By construction of the integral solution and by
Lemma~\ref{lem:basic_input_output_estimates}, we have
\[
\bm B\in L^2_{\mathrm{loc}}([0,t_{\max});V_B)
\cap
H^1_{\mathrm{loc}}([0,t_{\max});V_B')
\cap
C([0,t_{\max});X_B),
\]
and $L\in W^{1,1}_{\mathrm{loc}}([0,t_{\max}))$. Hence the energy identity
associated with the form equation, as used in
Theorem~\ref{thm:inhomogeneous_mqs_equation}, gives, for almost every
$t\in(0,t_{\max})$,
\[
\tfrac12\tfrac{\mathrm d}{\mathrm dt}\|\bm B(t)\|_{X_B}^2
+
\mathfrak a(\bm B(t),\bm B(t))
=
\left\langle
\mathcal B_{\mathrm{em}}i(t),\bm B(t)
\right\rangle_{V_B',V_B}
+
\left\langle
\mathcal B_{\mathrm d}(\bm v_{L(t)}\times\bm B(t)),\bm B(t)
\right\rangle_{V_B',V_B}.
\]
Moreover,
\[
\frac{\mathrm d}{\mathrm dt}\frac{1}{2I_{\mathrm d}}L(t)^2
=
I_{\mathrm d}^{-1}L(t)
\mathcal T\bigl(\bm B(t),\mathcal C_{\mathrm d}\bm B(t)\bigr)
+
I_{\mathrm d}^{-1}L(t)M_{\ext}(t).
\]

The Lorentz coupling terms cancel. Indeed, by the definitions of
$\mathcal B_{\mathrm d}$, $\mathcal C_{\mathrm d}$, $\mathcal T$, and
$\bm v_L$, together with the scalar triple product identity,
\[
\left\langle
\mathcal B_{\mathrm d}(\bm v_{L(t)}\times\bm B(t)),\bm B(t)
\right\rangle_{V_B',V_B}
+
I_{\mathrm d}^{-1}L(t)
\mathcal T\bigl(\bm B(t),\mathcal C_{\mathrm d}\bm B(t)\bigr)
=0.
\]
Thus, for
\[
\mathcal E_{\mathrm{abs}}(t)
:=
\tfrac12\|\bm B(t)\|_{X_B}^2
+
\tfrac{1}{2I_{\mathrm d}}L(t)^2,
\]
we obtain
\[
\tfrac{\mathrm d}{\mathrm dt}\mathcal E_{\mathrm{abs}}(t)
+
\mathfrak a(\bm B(t),\bm B(t))
=
\left\langle
\mathcal B_{\mathrm{em}}i(t),\bm B(t)
\right\rangle_{V_B',V_B}
+
I_{\mathrm d}^{-1}L(t)M_{\ext}(t).
\]
Writing $K\bm B(t)=(\bm q(t),\bm h(t))$, the definition of
$\mathcal B_{\mathrm{em}}$ and Young's inequality give
\[
\left|
\left\langle
\mathcal B_{\mathrm{em}}i(t),\bm B(t)
\right\rangle_{V_B',V_B}
\right|
\leq
c_\chi\|i(t)\|_2\|\bm q(t)\|_{L^2(\Omega_{\mathrm c};\R^3)}
\leq
\tfrac12\mathfrak a(\bm B(t),\bm B(t))
+
\frac{c_\chi^2}{2}\|i(t)\|_2^2,
\]
with some uniform $c_\chi >0$. Integrating the energy identity and using the preceding estimate gives, for
every $t<t_{\max}$,
\[
\mathcal E_{\mathrm{abs}}(t)
+
\frac12
\int_0^t
\mathfrak a(\bm B(\tau),\bm B(\tau))
\,\mathrm d\tau
\leq
\mathcal E_{\mathrm{abs}}(0)
+
\frac{c_\chi^2}{2}
\|i\|_{L^2([0,t];\R^m)}^2
+
\int_0^t
I_{\mathrm d}^{-1}|L(\tau)|\,|M_{\ext}(\tau)|
\,\mathrm d\tau.
\]
Since
\[
I_{\mathrm d}^{-1}|L(\tau)|
\leq
\sqrt{\frac{2}{I_{\mathrm d}}}\,
\mathcal E_{\mathrm{abs}}(\tau)^{1/2},
\]
we obtain
\[
\mathcal E_{\mathrm{abs}}(t)
\leq
a(t)
+
\sqrt{\frac{2}{I_{\mathrm d}}}
\int_0^t
\mathcal E_{\mathrm{abs}}(\tau)^{1/2}
|M_{\ext}(\tau)|
\,\mathrm d\tau,
\]
where
\[
a(t)
:=
\mathcal E_{\mathrm{abs}}(0)
+
\frac{c_\chi^2}{2}
\|i\|_{L^2([0,t];\R^m)}^2.
\]
Set
\[
Y(t)
:=
\sup_{0\leq s\leq t}
\mathcal E_{\mathrm{abs}}(s)^{1/2}.
\]
Then
\[
Y(t)^2
\leq
a(t)
+
\sqrt{\frac{2}{I_{\mathrm d}}}\,
Y(t)
\|M_{\ext}\|_{L^1([0,t])}.
\]
Solving the quadratic inequality $y^2-by-a\leq0$ and using
$\sqrt{b^2+4a}\leq b+2\sqrt a$, we obtain
\[
y\leq \frac{b+\sqrt{b^2+4a}}{2}\leq \sqrt a+b.
\]
Consequently,
\[
Y(t)
\leq
a(t)^{1/2}
+
\sqrt{\frac{2}{I_{\mathrm d}}}\,
\|M_{\ext}\|_{L^1([0,t])},
\]
and therefore
\[
\sup_{0\leq s\leq t}
\mathcal E_{\mathrm{abs}}(s)
\leq
2\mathcal E_{\mathrm{abs}}(0)
+
c_\chi^2
\|i\|_{L^2([0,t];\R^m)}^2
+
\frac{4}{I_{\mathrm d}}
\|M_{\ext}\|_{L^1([0,t])}^2.
\]
Since $\mathcal E_{\mathrm{abs}}$ defines a norm equivalent to
$(\bm B,L)\mapsto
\|\bm B\|_{X_B}^2+|L|^2$
on $X_B\times\R$, this proves the asserted a priori estimate.

Suppose now that $t_{\max}<\infty$. Since
$i\in L^2([0,t_{\max}];\R^m)$ and
$M_{\ext}\in L^1([0,t_{\max}])$, the preceding estimate implies
\[
\sup_{0\leq t<t_{\max}}
\bigl(
\|\bm B(t)\|_{X_B}+|L(t)|
\bigr)
<\infty.
\]
This contradicts the blow-up alternative in
Proposition~\ref{prop:local_solvability_integral_system}. Hence
$t_{\max}=\infty$.
In particular, the variation-of-constants system has a unique solution on
$[0,T]$ for every $T>0$. 

By Proposition~\ref{prop:weak_solution_voc_equivalence}, the reconstruction
\eqref{eq:weak_solution_reconstruction} is a weak solution in the sense of
Definition~\ref{def:weak_solution}. The asserted regularity follows from
Theorem~\ref{thm:inhomogeneous_mqs_equation} and the scalar integral equation
for $L$. The energy balance follows from
Proposition~\ref{prop:energy_balance_weak_solution}.

Conversely, every weak solution gives rise, by
Proposition~\ref{prop:weak_solution_voc_equivalence}, to a solution of the
variation-of-constants system \eqref{eq:coupled_voc_fixed_point}. Since this
system has a unique solution in the stated class, the pair $(\bm B,L)$ is
unique. By Lemma~\ref{lem:finite_dissipation_representatives_unique}, the
finite-dissipation representative is uniquely determined by $\bm B$, and
\eqref{eq:weak_solution_reconstruction} uniquely determines $\bm E$ and
$\bm J_{\mathrm d}$. Hence the weak solution is unique.
\end{proof}

\section{Possible model extensions}
\label{sec:model_extensions}

We briefly indicate two natural extensions of the model. The purpose of this
section is not to provide a complete solvability theory, but to record the
corresponding model modifications and to point out which parts of the above
analysis may be expected to carry over.

\subsection{Coupling with the full Maxwell system}
\label{subsec:extension_full_maxwell}

A first possible extension is to replace the magneto-quasistatic
approximation by the full Maxwell system. Then the electric field is no
longer an algebraic variable, but becomes part of the dynamic state. Let the
electric permittivity
$\varepsilon:\Omega\to\R^{3\times3}$ be pointwise symmetric and uniformly
positive definite, with
$\varepsilon,\varepsilon^{-1}\in L^\infty(\Omega;\R^{3\times3})$.
The formal model is obtained from \eqref{eq:model} by replacing the algebraic
Ampere constraint with Maxwell's evolution equation,
\begin{align*}
\tfrac{\mathrm{d}}{\mathrm{d}t} \bm B(\xi,t)
  &=
  -\curl\bm E(\xi,t),
  && \xi\in\Omega,
  \\
\varepsilon(\xi)\tfrac{\mathrm{d}}{\mathrm{d}t}\bm E(\xi,t)
  &=
  \curl\bigl(\nu\bm B\bigr)(\xi,t)-\bm J(\xi,t),
  && \xi\in\Omega,
  \\
\dot L(t)
  &=
  \bm e_x^\top
  \int_{\Omega_{\mathrm d}}
  \bm r(\xi)\times
  \bigl(\bm J_{\mathrm d}(\xi,t)\times\bm B(\xi,t)\bigr)
  \,\mathrm d\xi
  +
  M_{\ext}(t),
  &&
  \\
\bm J(\xi,t)
  &=
  \begin{cases}
  \sigma_{\mathrm{em}}(\xi)\bm E(\xi,t)
  +
  \bm J_{\mathrm{em}}(\xi,t),
  & \xi\in\Omega_{\mathrm{em}},
  \\
  \bm J_{\mathrm d}(\xi,t),
  & \xi\in\Omega_{\mathrm d},
  \\
  0,
  & \xi\in\Omega_0,
  \end{cases}
  \\
\bm J_{\mathrm d}(\xi,t)
  &=
  \sigma_{\mathrm d}(\xi)
  \left(
  \bm E(\xi,t)
  +
  \bm v(\xi,t)\times\bm B(\xi,t)
  \right),
  && \xi\in\Omega_{\mathrm d},
  \\
\bm E(\xi,t)\times\bm n_o(\xi)
  &=
  -\zeta(\xi)\,
  \bm n_o(\xi)\times
  \bigl(\nu(\xi)\bm B(\xi,t)\times\bm n_o(\xi)\bigr),
  && \xi\in\partial\Omega.
\end{align*}
where
\begin{equation*}
\bm r(\xi):=\xi-c,\qquad
\bm v(\xi,t):=I_{\mathrm d}^{-1}L(t)(\bm e_x\times\bm r(\xi)),
\qquad
\bm J_{\mathrm{em}}(\xi,t):=\chi(\xi)i(t).
\end{equation*}
The dynamic state of this model is $(\bm E,\bm B,L)$.

Formally, the corresponding energy is
\[
\mathcal E_{\mathrm{full}}(t)
:=
\frac12
\int_\Omega
(\varepsilon\bm E(t))^\top\bm E(t)
\,\mathrm d\xi
+
\frac12
\int_\Omega
(\nu\bm B(t))^\top\bm B(t)
\,\mathrm d\xi
+
\frac{1}{2I_{\mathrm d}}L(t)^2.
\]
The Lorentz coupling terms cancel as in
Proposition~\ref{prop:energy_balance_weak_solution}. Indeed, in the disk
\[
\bm E
=
\sigma_{\mathrm d}^{-1}\bm J_{\mathrm d}
-
\bm v\times\bm B,
\]
so that the electromagnetic power contribution contains
\[
-\int_{\Omega_{\mathrm d}}
\bm J_{\mathrm d}^\top\bm E
\,\mathrm d\xi
=
-\int_{\Omega_{\mathrm d}}
\bm J_{\mathrm d}^\top\sigma_{\mathrm d}^{-1}\bm J_{\mathrm d}
\,\mathrm d\xi
+
\int_{\Omega_{\mathrm d}}
(\bm v\times\bm B)^\top\bm J_{\mathrm d}
\,\mathrm d\xi,
\]
and the last term cancels with the mechanical power generated by the Lorentz
torque.

For the solvability analysis, the full Maxwell system leads to a different
linear realization than the magneto-quasistatic model. In the undamped case,
the Maxwell operator is naturally described by a strongly continuous group on
the electromagnetic energy space \cite{StaffansWeiss2013}. With conductivity and impedance boundary
damping, one obtains a dissipative Maxwell semigroup. In contrast to the
magneto-quasistatic formulation above, the motional eddy current term enters
additively in the electric field equation:
\[
\sigma_{\mathrm d}(\bm v_L\times\bm B)
\in L^2(\Omega_{\mathrm d};\R^3).
\]
Thus this nonlinear term already takes values in the state space of the
Maxwell system, and no extrapolation space is needed for this input. Moreover,
the Lorentz torque depends continuously on the electromagnetic state and on
$L$. Hence a Picard iteration for the triple $(\bm E,\bm B,L)$ can be expected
to yield local solvability, while the formal energy balance provides the
natural a priori estimate for continuation.
\subsection{Non-axisymmetric rotating disks}
\label{subsec:extension_nonaxisymmetric_disk}

A second possible extension concerns disk geometries which are not invariant
under rotations about the axis. In the present paper the disk domain is fixed
in the laboratory frame. This is consistent with a rotationally symmetric
disk, since rotating the disk does not change the occupied conducting region.
For a non-axisymmetric disk, for instance a slotted or perforated disk, the
conducting region and the material coefficients depend on the angular
position of the disk. Hence the angular position has to be included as an
additional state variable.

Let $\theta(t)\in\R$ denote the angular position of the disk. The
angular kinematics are
\[
\dot\theta(t)=I_{\mathrm d}^{-1}L(t).
\]
The rigid-body velocity field is still given by
\[
\bm v(\xi,t)
=
I_{\mathrm d}^{-1}L(t)(\bm e_x\times\bm r(\xi)).
\]
In contrast to the rotationally symmetric case, however, the disk
conductivity is now angle-dependent. We write
\[
\sigma_{\mathrm d}=\sigma_{\mathrm d}(\xi,\theta),
\]
where the second argument is $2\pi$-periodic,
\[
\sigma_{\mathrm d}(\xi,\theta+2\pi)
=
\sigma_{\mathrm d}(\xi,\theta).
\]
For each fixed $\theta$, its support describes the
conducting region occupied by the disk at angular position $\theta$.

With this notation, the motional electric field in the disk is
$\bm v(\xi,t)\times\bm B(\xi,t)$,
and total disk current density becomes
\[
\bm J_{\mathrm d}(\xi,t)
=
\sigma_{\mathrm d}(\xi,\theta(t))
\left(
\bm E(\xi,t)
+
\bm v(\xi,t)\times\bm B(\xi,t)
\right).
\]
The Lorentz force density acting on the disk is therefore
\[
\bm f_{\mathrm L}(\xi,t)
=
\bm J_{\mathrm d}(\xi,t)\times\bm B(\xi,t),
\]
and the corresponding torque about the rotation axis is
\[
\bm e_x^\top
\int_{\Omega}
\bm r(\xi)\times
\bigl(
\bm J_{\mathrm d}(\xi,t)\times\bm B(\xi,t)
\bigr)
\,\mathrm d\xi.
\]
Here the integral may be written over all of $\Omega$, since
$\sigma_{\mathrm d}(\cdot,\theta(t))$ vanishes outside the disk at angular
position $\theta(t)$.

The corresponding magneto-quasistatic model takes the form
\begin{align*}
\tfrac{\mathrm{d}}{\mathrm{d}t} \bm B(\xi,t)
  &=
  -\curl\bm E(\xi,t),
  && \xi\in\Omega,
  \\
\dot L(t)
  &=
  \bm e_x^\top
  \int_{\Omega_{\mathrm d}(\theta(t))}
  \bm r(\xi)\times
  \bigl(\bm J_{\mathrm d}(\xi,t)\times\bm B(\xi,t)\bigr)
  \,\mathrm d\xi\\&\qquad
  +
  M_{\ext}(t),
  &&
  \\
\dot\theta(t)
  &=
  I_{\mathrm d}^{-1}L(t),
  &&
  \\
\bm J_{\mathrm d}(\xi,t)
  &=
  \sigma_{\mathrm d}(\xi,\theta(t))
  \left(
  \bm E(\xi,t)
  +
  \bm v(\xi,t)\times\bm B(\xi,t)
  \right),
  && \xi\in\Omega_{\mathrm d}(\theta(t)),
  \\
\curl\bigl(\nu\bm B\bigr)(\xi,t)
  &=
  \begin{cases}
  \sigma_{\mathrm{em}}(\xi)\bm E(\xi,t)
  +
  \bm J_{\mathrm{em}}(\xi,t),
  & \xi\in\Omega_{\mathrm{em}},
  \\
  \bm J_{\mathrm d}(\xi,t),
  & \xi\in\Omega_{\mathrm d}(\theta(t)),
  \\
  0,
  & \xi\in\Omega_0(\theta(t)),
  \end{cases}
  \\
\bm E(\xi,t)\times\bm n_o(\xi)
  &=
  -\zeta(\xi)\,
  \bm n_o(\xi)\times
  \bigl(\nu(\xi)\bm B(\xi,t)\times\bm n_o(\xi)\bigr),
  && \xi\in\partial\Omega.
\end{align*}
where
\begin{equation*}
\begin{gathered}
\Omega_{\mathrm d}(\theta)
:=
c+R_\theta
\bigl(\Omega_{\mathrm d}^{\mathrm{ref}}-c\bigr),
\qquad
\Omega_0(\theta)
:=
\Omega\setminus
\overline{\Omega_{\mathrm{em}}\cup\Omega_{\mathrm d}(\theta)},\\
\bm r(\xi):=\xi-c,\qquad
\bm v(\xi,t):=I_{\mathrm d}^{-1}L(t)(\bm e_x\times\bm r(\xi)),
\qquad
\bm J_{\mathrm{em}}(\xi,t):=\chi(\xi)i(t).
\end{gathered}
\end{equation*}
Here $R_\theta\in\R^{3\times3}$ is the matrix describing rotation by
the angle $\theta$ about the axis $\{c+s\bm e_x:s\in\R\}$.

The boundary condition on $\partial\Omega$ can be kept in the same
Silver--Müller form as in \eqref{eq:model_boundary}.

From the analytical point of view, the additional state variable $\theta$
introduces a state-dependent geometry. In the rotationally symmetric case,
the disk domain, the conductivity, the disk current map, and the torque map
are fixed. In the non-axisymmetric case, these objects depend on the angular
position of the disk. Thus the magnetic equation is no longer governed by a
single autonomous magneto-quasistatic realization on a fixed disk geometry.

One possible way to formulate the problem on fixed spaces is to pull the disk
equations back to a reference configuration
$\Omega_{\mathrm d}^{\mathrm{ref}}$. Then the geometry is fixed, while the
conductivity and the coupling terms become explicitly $\theta$-dependent.
A well-posedness analysis could then be attempted by a fixed-point argument
for the triple $(\bm B,L,\theta)$, provided that the angle-dependent
coefficients depend sufficiently regularly on $\theta$ and satisfy uniform
boundedness and positivity assumptions. The Lorentz energy cancellation is
expected to persist, since it is based on the rigid-body velocity field and
the scalar triple product identity rather than on rotational symmetry of the
disk.

The same modeling idea can also be combined with the full Maxwell extension
described in Subsection~\ref{subsec:extension_full_maxwell}. In that case the
state becomes $(\bm E,\bm B,L,\theta)$, and the angle-dependent disk current
enters Ampere's law as an additional nonlinear source term.

\section{Conclusion}
\label{sec:conclusion}

The finite-dissipation formulation provides a convenient framework for the
analysis of the eddy current brake. It separates the closed linear
magneto-quasistatic realization from the nonlinear electromechanical coupling
and makes the energy cancellation mechanism explicit. This yields global
existence and uniqueness of weak solutions under low regularity assumptions on
the inputs.

The approach also leaves room for further model refinements. In particular,
absorbing boundary conditions are incorporated without changing the basic
argument, while extensions to full Maxwell dynamics or non-axisymmetric disks
would require additional analytical work.

% To print the credit authorship contribution details
\printcredits

%% Loading bibliography style file
%\bibliographystyle{model1-num-names}
\bibliographystyle{cas-model2-names}

% Loading bibliography database
\bibliography{eddy_literature}

\end{document}